\def\qed{\hbox{${\vcenter{\vbox{		 
   \hrule height 0.4pt\hbox{\vrule width 0.4pt height 6pt
   \kern5pt\vrule width 0.4pt}\hrule height 0.4pt}}}$}}
\newtheorem{theorem}{Theorem}
\newtheorem{definition}[theorem]{Definition}
\newtheorem{lemma}[theorem]{Lemma}
\newtheorem{remark}{Remark}
\def\cF{\mathcal F}
\def\bE{\mathbb E}
\def\bP{\mathbb P}
\def\bR{\mathbb R}
\begin{document}


\title[Epidemics on networks with stochastic infection rates]{Epidemics on networks with heterogeneous population and stochastic infection rates}



\author{Stefano Bonaccorsi}
\address[Stefano Bonaccorsi]
{Department of Mathematics, University of Trento, via Sommarive 14, 38123 Povo (Trento), Italia}
\email{stefano.bonaccorsi@unitn.it}

\author{Stefania Ottaviano} 
\address[Stefania Ottaviano]{CREATE-NET, Via alla Cascata 56/d, 38123 Povo (Trento), Italy and Department of Mathematics, University of Trento, via Sommarive 14, 38123 Povo (Trento), Italy}
\email{sottaviano@create-net.org}

%
%
%

		%
		
\begin{abstract}
In this paper we study
the diffusion of an SIS-type epidemics on a network under the pre\-sence of a random environment, that enters in the definition of the infection rates of the nodes.
Accordingly, we model the infection rates in the form of independent stochastic processes.
\\
To analyze the problem,
we apply a mean field approximation, which allows to get a stochastic diffe\-rential equations for the
probability of infection in each node, and classical tools about stability, which require to find suitable Lyapunov's functions.
\\
Here, we  find  conditions which guarantee, respectively, extinction and  stochastic persistence of the epidemics.
We show that there exists two regions, given in terms of the coefficients of the model, {one where the system
goes to extinction almost surely, and the other where it is stochastic permanent}. These two regions are, unfortunately, not adjacent,
as there is a gap between them, {whose extension depends on the specific level of noise}. In this last region, we perform numerical analysis to suggest the true behavior of the solution.

\end{abstract}



\maketitle

\section{Introduction}

Epidemiology studies the spread of diseases in (human) population, where the brackets mean that
similar problems arise in real world problems such as, e.g., virus propagation in computer networks { and diffusion of information}.

{The
epidemic spreading is governed by an inherently probabilistic
process. 
The simplest stochastic model formulations
are based on discrete and continuous time Markov chains \cite{allen2008introduction,naasell2002,Britton2010}. In turn, using Markov processes, we can obtain a deterministic approximation for the whole trajectory in the case of large population sizes \cite{kurtz1971,Britton2010} and, over the years, works on deterministic models have dominated strongly over works on stochastic models, because of their greater simplicity and tractability \cite{naasell2002}. However even if they provide an appropriate way to represent certain situations
of interest, a correct analysis should consider explicitly the stochastic
nature of epidemic spreading, especially when dealing
with small populations  \cite{Britton2010, PietSurvey}.} 
{Also stochastic differential equations have been used to approximate the Markov chain model (see for instance \cite{pollett2001diffusion,tornatore2005stability,ding2008asymptotic, McCormack2006, Dargatz2006, allen2008introduction}).}\\
{ An additional simpler approach, for modeling random fluctuations}, consists of introducing parameter perturbations in the ordinary differential equations (see e.g. \cite{Mao2002, Gray2011,Spagnolo2004, Stanescu2009} ). Indeed,
 the parameters have a great variability depending on errors in the observed and measured data, on uncertainties, e.g. when some variables cannot be measured, on lack of knowledge or, simply, on the presence of a random environment. 
Hence, it is appropriate to consider the parameters as random variables, with a specified  distribution, and study differential equations with random coefficients or,  incorporating stochastic effects {instead, 
considering stochastic equations whose parameters depend on the random environment}.
{In this regard it is useful to reveal how the noise affects the population system \cite{li2009population}.
In the literature, we can find several examples 
of processes that can be used as environmental noise, like the Brownian motion or
the telegraph noise, where the parameters switch from one set to another according
to a Markov switching process (see e.g.  \cite{Luo2007,takeuchi2006evolution,li2009population})}.\\\\

 {\subsection{The population structure.}
It is worth mentioning that in most of the classical epidemic model,
the basic approach is to consider an \textit{ homogenous mixing approximation}, meaning
that individuals in the population are well mixed and interact with each other
completely at random.}
{This assumption, however, is very strong and disputable, both in deterministic
and stochastic models, 
since details such as geographical location, presence of community
structures, or the specific role of each individuals in the epidemic spreading
are ignored.\\
An implicit assumption, in the homogeneous approximation, 
is that each infected
individual has a small chance to infect {\em every} susceptible individual in the population.
Conversely, diseases spread through a network of social contacts, thus
the epidemic has a much higher probability of spreading to a limited set
of susceptible contacts \cite{keeling2005}, and the dynamics of disease transmission
strongly depends on the properties of the population contact network.}
{Hence, social contact patterns among individuals 
is a key ingredient in
the realistic characterization and modeling of epidemics \cite{fumanelli2012inferring}.}\\ 
{We need to consider also that
the effective diffusion occurs through individual people, that may differ in many different aspects (genetics,
biology or social behavior), thus the parameters characterizing the infection (say, the rate of infection or the rate of recovery) have a variety among the population and,
 in most cases they cannot be fixed a priori: only their statistical properties are known. A short overview on works in literature that consider heterogeneous populations can be found in \cite{widder2014heterogeneous, qu2015sis}}.
 \\

{Recent papers \cite{qu2015sis,buono} have taken into account epidemic spreading on networks with time-constant, heterogeneous parameters, considering these parameters as random variables with given distribution.} \\
{However since most realistic heterogeneous parameter distributions are not constant in time, in our work we consider them as additional dynamical variables.}\\
{Definitively we consider an
{\em heterogeneous population}, {taking into account the fluctuation in time of the transmission of infection, for an epidemics that spread on a given population contact network}.}\\


{\subsection{Outline and main results}}
 
Through the paper we shall discuss the dynamics of the following system of 
It\^o stochastic differential equations
\begin{equation}
\label{SDE1}
\begin{aligned}
{\mathrm d} x_i(t) =& \left[\beta s_i(t) (1 - x_i(t)) - \delta x_i(t)\right] \, {\mathrm d}t
 + \sigma_i(x_i(t)) s_i(t) (1 - x_i(t)) \, {\mathrm d}w_i(t), 
\\
s_i(t) =& \sum_{j=1}^N a_{ij} x_j(t), \qquad i \in \{1, \dots, N\}
\end{aligned}
\end{equation}
for the unknowns $x_i(t)$ which represent the probability that individual $i$ is infected at time $t$,
with a given vector of initial conditions $(x_1(0), \dots, x_N(0))$.\\

 The paper is organized as follows.
In Section \ref{dyn} we consider the existence of the solution for all times. Typically, 
conditions assuring the non-explosion of the solution involves local Lipschitz continuity and
a linear growth condition. In our case, we miss this last condition so it is 
necessary to prove that the solution does not explode at a finite time. {We prove also that the unique global solution remains within $(0,1)^N$ whenever it starts from this region}.
\\

Then, In Section \ref{s3}
we concentrate on the long time behavior of the solution. 
We prove that there are two typical phenomena arising in accordance with the relative values of the parameters
of the model: the mean value of the infection rate $\beta$, the recovery rate $\delta$ and the intensity of the noise. 
We show that in the case that $\delta$ is large with respect to the other parameters, then
the solution tends to extinction almost surely. We obtain this result by proving the global attractiveness of the null solution for system \eqref{sdevect} in Section \ref{s3}.
We use a standard technique of Lyapunov functions, introduced by Khasminskii \cite{Khasminskii1980} and later 
employed by many other authors (see for instance \cite{Mao1997, Gard1988}).
Later, in Section \ref{s2.1} we discuss on the {\em stochastic permanence} of the solution.
This concept,  which can be paraphrased by saying that the epidemic process will survive forever, 
is one of the most important and interesting topics in the analysis of the model. 
In section \ref{s2.1} we give conditions on the parameters, that informally can be stated by saying that $\delta$ (the recovery rate) is small compared to the other parameters, such that
the epidemic process is stochastically permanent.
\\
 Between the two regions of extinction and permanence we have a gap, given in terms of the parameters of the model, thus, in Section \ref{num} we provide numerical results that also investigate the behavior of the solution into this middle region.

\section{Description of the model}

In this paper, we consider
a continuous-time SIS (susceptible - infected - susceptible) model for the diffusion
of epidemics spreading on an undirected simple graph $G=(V,E)$, {thus if a node $i$ is in contact with node $j$ and consequently $i$ can influence $j$, also the node $j$ can influence node $i$. At this stage we consider that the infection rate is equal for all pair of nodes, but later we assume the possibility to have different infection rates.} 
The population is described via an $N \times N$ contact matrix $A$ (i.e. the adjacency matrix of $G$);
for simplicity, we do not allow for birth, death and movement of the individuals 
{(we underline, however, that our model can be easily adapted to cover the case of a time-varying contact matrix $A = A(t)$).
Although we refers to epidemic diffusion in human population, we shall emphasize that the model
 is well suited to represent also man-made architectures}.

In the SIS model, an individual can be repeatedly infected, recover and yet be infected again. This model covers those types of disease that does not confer immunity, e.g. common cold, sexually transmitted
diseases, and other bacterial infections \cite{garnett1996sexually, lajmanovich1976deterministic}. 
Computer viruses also belong in this category, indeed once cured, without a constant upgrade of the anti-virus software, the computer has no way to fend off subsequent attacks be the same virus. 
\cite{newman2003structure}.
The SIS model can be used also for describing some social behaviors and emotions \cite{hill2010emotions}.

 The recovery process is a Poisson process with rate $\delta$, and the infection process is a \textsl{per link} Poisson process where the infection rate between 
an healthy and an infected node is $\beta$.
Thus each susceptible node can be infected, at time $t$, by any of its neighbors, with a {total infection rate} $\beta s_i(t)$, where $s_i(t)$ stands for
the strength of the epidemics in the nodes that are directly connected with it. 
{All the infection and recovery processes are independent, thus they compete for the production of an event (infection or recovery)}.

{The state of the collective system of all nodes, i.e. the state of the network,
is actually the joint state of all the nodes' states.
Since we assume that the infection and curing processes are of Poisson
type, the SIS process, developing on a graph with N nodes, can be modeled
as a continuous-time Markov process with $2^N$ states, covering all possible
combinations in which N nodes can be infected \cite{VanMieghem2009,simon2011}.

\subsection{Mean field approximation} \label{mfa}

Unfortunately, the exponential growth with $N$ of the state space poses severe limitations in order to determine the set of solutions for large, real networks.} For this reason,
 in \cite{VanMieghem2009} a first order mean-field approximation (NIMFA) of the exact model is proposed. \\
Basically NIMFA replaces
the actual infection rate for the node $i$,
$\beta \sum\limits_{i=1}^N a_{ij} X_j(t)$
(where the sum is done on all the neighbor nodes),
by its average rate
\begin{align*}
\beta \sum\limits_{i=1}^N a_{ij} \bE[X_j(t)], 
\end{align*}
where $X_j(t)$ is the state variable representing the state of the node $j$.

Thus, we introduce the unknown $x_i(t)$ which represents the probability that individual $i$ is infected at time $t$, $x_i(t) = \bE[X_i(t)]$ and 
in the NIMFA model their evolution obeys the following differential equation
\begin{align}\label{nimfa}
\dot x_i(t) =& \beta s_i(t) (1 - x_i(t)) - \delta x_i(t), \qquad i \in \{1, \dots, N\}
\\
s_i(t) =& \sum_{j=1}^N a_{ij} x_j(t)
\end{align}
where $s_i(t)$ represents the strength of the infection that may reach individual $i$ through the adjacency matrix $A = \big( a_{ij} \big)$.
\\
The time-derivative of the infection probability of individual $i$ consists of two competing processes:
\begin{enumerate} 
\item while healthy (with probability $1 - x_i(t)$), all infected neighbors, whose average number is $s_i(t)$, try to infect node $i$ at rate $\beta$;
\item while individual $i$ is infected (with probability $x_i(t)$) it is cured at rate $\delta$.
\end{enumerate}
\textit{Epidemic threshold.} 
{The exact SIS Markov model has a unique absorbing state, so in the long run we shall get the extinction of the epidemics.
However the waiting time to absorption is a random variable whose distribution depends on the initial state of the system, and on the parameters of the model \cite{allen2008introduction,naasell1996quasi, naasell2002,Bonaccorsi2015}. 
In fact there is a critical value $\tau_c$ of the effective infection rate $\tau= \beta/\delta$, whereby if $\tau$ is distinctly larger than $\tau_c$ the time to absorption  grows exponentially in $N$,
 while for $\tau$ distinctly less than $\tau_c$ the lifetime of epidemic is rather small \cite{naasell1996quasi, ganesh2005,VanMieghem2009}}. 
 
{Thus, above the threshold a typical realization of the epidemic process may experience a very long  time before absorption to the zero-state. 
During this period, the distribution of the number
of infected individuals is close to the distribution of the
number
of infected individuals  conditioned on non-extinction, the so-called 
 {\em quasi-stationary distribution} \cite{seneta1967quasi, naasell1996quasi, naasell2002}. 
However, the analytical computation of the exact quasi-stationary distribution is not feasible, as it is showed in \cite{naasell1996quasi}}. 
\par\noindent
{Even so, numerical simulations of SIS processes reveal that, already for reasonably small networks $(N \geq 100)$ and when $\tau > \tau_c$, the overall-healthy state is only reached after an unrealistically long time. 
Hence, the indication of the model is that, in the case of real networks, one should expect that the extinction of epidemics is hardly ever attained \cite{VanMieghem2013, Draief2010}.
For this reason the literature is mainly concerned with establishing the value of the epidemic threshold, being a key parameter behind immunization strategies related to the network protection against viral infection.}

 {NIMFA determines the epidemic threshold for the effective infection rate as $\tau^{(1)}_c =\frac{1}{\lambda_{1}(A)}$, where the superscript $(1)$ refers to the first-order mean-field approximation \cite{VanMieghem2009,VanMieghem2014,Bonaccorsi2015}.}\\
{When $\tau > \tau^{(1)}_c$, the mean-field equation \eqref{nimfa} shows 
a second non-zero steady-state that reflects well the observed viral behavior \cite{VanMieghem2012a}: it can be seen as the analogous of the quasi-stationary distribution 
of the exact stochastic SIS model.}\\

A consequence of replacing the random variable
by its expectation in the mean field approximations results in the neglecting of correlations
in the dynamic process: mean field approximation treats the joint probability
$\bP(X_i = 1,X_j = 1)$ as if $X_i$ and $X_j$  were independent,
$\bP(X_i = 1) \bP(X_j = 1)$. 
As a consequence of this simplification, NIMFA yields an upper bound for the probability of infection of each node, as well as a lower bound for the epidemic threshold, i.e., $\tau_c \ge \tau_c^{(1)}$. 
This fact was rigorously proved in \cite{CatorPositivecorrelations}, by showing that the state of nodes are non-negatively correlated.  
Thus from the applicative standpoint, a key issue is to determine for which networks of given order NIMFA performs worst, meaning that $\alpha =\frac{\tau_c}{\tau_c^{(1)}}$ is largest.
One can observe that,
basically, if the 
states of the nearest nodes are sufficiently weakly dependent and the number of
neighbors of node $i$, $d_i$ (i.e., the degree of node $i$) is large enough so that the {Lindberg's} Central
Limit Theorem 
is applicable, then there is a good approximation result by previous replacement. 
Informally, we can say that the mean field approximation
holds if in the underlying network, the degree of the nodes increase as
the number of nodes $N$ tends to infinity \cite{accuracy}. 
\\
Moreover, evaluations on the variance of $\beta \sum\limits_{i=1}^N a_{ij} X_j(t)$ in \cite{VanMieghem2009}, shows that  the deviations between
the NIMFA model and the exact SIS are largest for intermediate
values of $\tau$.

{As we said in the previous section, NIMFA assumes that the infection rate (such as the recovery rate) is given a \textit{priori}, it is constant among the population and does not change in time}.
 We now modify the reference model to allow heterogeneity of the population, {and considering that the reference parameters can be affected by stochastic fluctuations.} 

{Let $\Omega$ be a given reference sample space}, and $\omega \in \Omega$ one possible outcome that represents a possible perturbation of the population's parameter.
We consider that the disease spreads among the population according to the dynamics
\begin{equation}
\label{e:080615-1}
\begin{aligned}
\dot x_i(t,\omega) =& \beta(\omega) s_i(t,\omega) (1 - x_i(t,\omega)) - \delta(\omega) x_i(t,\omega), \qquad i \in \{1, \dots, N\}
\\
s_i(t,\omega) =& \sum_{j=1}^N a_{ij} x_j(t,\omega)
\end{aligned}
\end{equation}
Hence, the rate coefficients, $\beta(\omega)$ and $\delta(\omega)$, as well as the unknowns $x_i(t,\omega)$s, are assumed to be random variables on the probability space $(\Omega, \cF, \bP)$, for a given $\sigma$-algebra
$\cF$ and a probability measure $\bP$ on it.

{However  realistic heterogeneous parameter distributions are not constant in
time but could be considered as additional dynamical variables \cite{widder2014heterogeneous}.
Thus, in order to model the fluctuations in time of the parameters, we consider white noise \cite{Arnold1974}, a natural starting point for the case when the functional
form and properties of the stochastic process are not known (\cite{widder2014heterogeneous,Mao2002,Gray2011}).}

In particular, we
consider that each node {$i$} can be infected, {at time $t$}, by all its infected neighbors with rate $\beta_i(t)$, that is described by a stochastic process of the form
\begin{align*}
\beta_i(t) \longrightarrow \beta + \sigma_i(x(t)) \, \dot w_i(t),
\end{align*}
where $\dot w_i(t)$ is the white-noise mapping and the functions {$\sigma_i: \mathbb R \rightarrow [0, + \infty)$}, {that provide the noise
level for each node}, 
are 
{locally Lipschitz continuous and satisfy}
 \begin{equation}\label{cond}
\sup_{x \in (0,1)} \frac{\sigma_i(x)}{x} \le M, \quad \text{for every\ } i = 1, \dots, N.
\end{equation}
In order to make things formal, we shall introduce a standard $N$-dimensional Brownian motion
$W(t) = \left(w_1(t), \dots, w_N(t) \right)$  
defined on a stochastic basis $(\Omega, \cF, \{\cF_t\}, \bP)$ and interpret $\dot w_i(t)$ as ${\rm d}w_i(t)$ in the It\^o sense.

It is important to discuss the model for the diffusion coefficient.
Our choice implies that the intensity of the infection rate varies around a mean value, and the disturbance is small if the 
value of the probability of infection is small. {Notice that, choosing a diffusion coefficient independent of the epidemic level would have implied
a random variability of the solution also near the zero level, thus allowing the solution to go below zero, 
region that has no physical meaning.}

{The dependence of the diffusion coefficient on the solution is typical in papers concerning 
population dynamics with environmental noise. 
E.g., in \cite{Luo2007}, 
the noise intensities depend linearly
on the population sizes, and clearly this assumptions satisfy the condition \eqref{cond}}.
{When random environment is reflected in fluctuations in the transmission
and recovery of the infection or, e.g, in parameters describing the interactions between many species, like in the Lotka-Volterra equations, the multiplicative noise term is considered appropriate (see e.g. \cite{Mao2002,mao2003asymptotic, Luo2007, Gray2011}). An example of stochastic system with additive noise, reflecting the presence of an eventual outside source of infection, can be found in \cite{widder2014heterogeneous}.}

On the other hand, for the sake of simplicity we shall assume that the recovery rate $\delta$ is a deterministic constant. 
The general case does not change substantially the results that we present here.
\\

The system is then described by the It\^o stochastic differential equation
\begin{equation}
\label{e:090615-1}
\begin{aligned}
{\mathrm d} x_i(t) =& \left[\beta s_i(t) (1 - x_i(t)) - \delta x_i(t)\right] \, {\mathrm d}t
 + \sigma_i(x_i(t)) s_i(t) (1 - x_i(t)) \, {\mathrm d}w_i(t), 
\\
s_i(t) =& \sum_{j=1}^N a_{ij} x_j(t), \qquad i \in \{1, \dots, N\}
\end{aligned}
\end{equation}
with a given vector of initial conditions $(x_1(0), \dots, x_N(0))$.
\\
We represent previous equation also by the vector-valued stochastic differential equation
\begin{equation}\label{sdevect}
\begin{aligned}
&{\mathrm d} X(t)=  f(X(t)) \, {\mathrm d}t + g(X(t)) \, {\mathrm d}W_t 
\\
& X(0) = (x_1(0), \dots, x_N(0)),
\end{aligned}
\end{equation}
where $X(t) = (x_1(t), \dots, x_N(t))$ while
$f(X(t))$ and $g(X(t))$ are functions defined in $\mathbb{R}^N$ and $L(\bR^N,\bR^N)$, respectively. The $j$-th component of $f$ is $\beta (1- x_j(t)) s_j(t) -\delta x_j(t)$, whereas $g$ is a diagonal matrix with  entries   $\sigma_j(x_j(t)) (1- x_j(t)) s_j(t)$.
We shall denote $\Delta = (0,1)^N$. 

\section{Basic dynamics of the stochastic model}\label{dyn}

\begin{theorem}\label{th:1}
For any initial condition $X(0) = (x_1(0), \ldots, x_N(0))$ such that $X(0) \in \Delta$, there exists a unique global solution to system \eqref{e:090615-1}
on $t \geq 0$ and the solution remains in $\Delta$ almost surely for all times.
\end{theorem}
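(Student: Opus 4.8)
The plan is to use the localization technique of Khasminskii, which deals with non-explosion and with the invariance of $\Delta$ in one stroke; this is necessary because the coefficients $f,g$ of \eqref{sdevect} grow faster than linearly, so the usual linear-growth criterion for non-explosion is unavailable. First I would note that $f$ and $g$ are locally Lipschitz on $\bR^N$ — the maps $\sigma_i$ are locally Lipschitz by hypothesis and the remaining factors are polynomials — so \eqref{e:090615-1} admits a unique maximal local solution $X$ on a random interval $[0,\tau_e)$, with $\tau_e$ the explosion time. Picking an integer $k_0$ with $X(0)\in[1/k_0,1-1/k_0]^N$, I would introduce for $k\ge k_0$ the stopping times
\[
\tau_k=\inf\bigl\{t\in[0,\tau_e): x_i(t)\le 1/k\ \text{or}\ x_i(t)\ge 1-1/k\ \text{for some}\ i\bigr\},
\]
with $\inf\emptyset=\tau_e$, observe that they increase to some $\tau_\infty\le\tau_e$, and reduce the theorem to the single claim $\tau_\infty=\infty$ almost surely: granting this, every fixed $t$ lies below $\tau_k$ for all large $k$, hence $X(t)\in(1/k,1-1/k)^N\subset\Delta$, and in particular $\tau_e=\infty$.

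The core of the argument is a Lyapunov estimate. I would take the function
\[
V(x)=-\sum_{i=1}^N\log\bigl(x_i(1-x_i)\bigr),
\]
which is nonnegative on $\Delta$ and tends to $+\infty$ as $x$ approaches $\partial\Delta$, and compute its action under the generator of \eqref{sdevect}. With $v(u)=-\log u-\log(1-u)$, so that $v'(u)=\tfrac{2u-1}{u(1-u)}$ and $v''(u)=\tfrac1{u^2}+\tfrac1{(1-u)^2}$, the drift part of $\mathcal{L}V$ splits, for each $i$, into $(2-1/x_i)\beta s_i\le 2\beta s_i$ and $\delta-\tfrac{\delta x_i}{1-x_i}\le\delta$, both bounded above because $0\le s_i\le N$ — morally, the drift points inward near $\partial\Delta$. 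The sensitive term is the diffusion part $\tfrac12\sum_i\bigl(\tfrac1{x_i^2}+\tfrac1{(1-x_i)^2}\bigr)\sigma_i(x_i)^2 s_i^2(1-x_i)^2$, and here hypothesis \eqref{cond} is exactly what is needed: $\sigma_i(x_i)\le Mx_i$ makes $\tfrac{\sigma_i(x_i)^2(1-x_i)^2}{x_i^2}\le M^2$, while the explicit factor $(1-x_i)^2$ cancels the $\tfrac1{(1-x_i)^2}$ singularity, so the diffusion part is at most $M^2\sum_i s_i^2$. The outcome is $\mathcal{L}V(x)\le K$ on $\Delta$ for a constant $K$ depending only on $\beta,\delta,M,N$ and $A$. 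I expect this verification — and the recognition that \eqref{cond} is precisely the assumption that prevents the process from reaching a face $\{x_i=0\}$ — to be the crux of the proof.

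To finish, I would argue by contradiction. If $\bP(\tau_\infty<\infty)>0$ there exist $T,\varepsilon>0$ with $\bP(\tau_k\le T)\ge\bP(\tau_\infty\le T)>\varepsilon$ for all $k\ge k_0$. Applying It\^o's formula to $t\mapsto V(X(t\wedge\tau_k))$ — the stochastic integral has zero expectation, since its integrand is bounded on $[1/k,1-1/k]^N$ — and taking expectations yields
\[
\bE\bigl[V(X(T\wedge\tau_k))\bigr]=V(X(0))+\bE\int_0^{T\wedge\tau_k}\mathcal{L}V(X(s))\,ds\le V(X(0))+KT.
\]
On the event $\{\tau_k\le T\}$ some coordinate of $X(\tau_k)$ equals $1/k$ or $1-1/k$, whence $V(X(\tau_k))\ge\log k$ (the other terms of $V$ being nonnegative), so $\varepsilon\log k\le V(X(0))+KT$, which is impossible for $k$ large. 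Hence $\tau_\infty=\infty$ almost surely, and this simultaneously gives the global existence of the solution and its confinement to $\Delta$.
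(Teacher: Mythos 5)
Your proposal is correct and follows essentially the same route as the paper: local Lipschitz continuity for a unique maximal solution, the stopping times $\tau_k$, the Lyapunov function $V(x)=-\sum_i\log\bigl(x_i(1-x_i)\bigr)$ with the bound $\mathcal{L}V\le K$ on $\Delta$ (where \eqref{cond} kills the $1/x_i^2$ singularity and the factor $(1-x_i)^2$ kills the $1/(1-x_i)^2$ one), and the same contradiction via $\varepsilon\log k\le V(X(0))+KT$. The only cosmetic difference is that you bound the drift contribution by the elementary splitting $(2-1/x_i)\beta s_i+\delta-\delta x_i/(1-x_i)\le 2\beta s_i+\delta$, whereas the paper maximizes the corresponding one-variable function explicitly; both yield a finite constant $K$.
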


One way to ensure  that the dynamics remain on $\Delta$ (i.e., $\Delta$ is invariant)  is  to note that
for any $x \in \partial \Delta$ the scalar product between $f(x)$ and the outward normal vector $\nu(x)$ is
\begin{align*}
\langle f(x), \nu(x) \rangle \le 0, \qquad x \in \partial \Delta,
\end{align*}
and that the diffusion terms is orthogonal to the outward normal vector $\nu(x)$:
\begin{align*}
\langle g(x) \cdot y, \nu(x) \rangle = 0, \qquad \text{for all\ } y \in \bR^N,\ x \in \partial \Delta.
\end{align*}
However, the proof below follows a different approach, which allows to prove some further properties of the solution.

\begin{proof}
Since the coefficients of the equation are locally Lipschitz continuous, for any given initial value $X(0) \in (0,1)^N$
there is a unique local solution 
on $t \in [0, \tau_e)$, where $\tau_e$ is the explosion time (see for instance \cite{Arnold1974}). 
\\
To show this solution is global, we need to show that $\tau_e = \infty$ a.s.
This is achieved if we prove a somehow stronger property of the solution, namely that it never 
{leaves the domain $\Delta$}.
The following computations are somehow standard (compare for instance \cite{Luo2007}).
Let $n_0 > 0$ be sufficiently large for  $x_i(0)\in \left(\frac{1}{n_0}, 1-\frac{1}{n_0} \right)$ for all $i=1, \dots N$.   
For each integer $n \geq n_0$, define the stopping time
\begin{align*}
\tau_n = \inf \left\{t \in [0, \tau_e) : \min_{1 \le i \le N} x_i(t) \leq 1/n   \text{  or  } \max_{1 \le i \le N} x_i(t)\geq 1- 1/n\right\},
\end{align*}%
where, as customary, $\inf \emptyset =+\infty$ (with $\emptyset$ denoting the empty set). 
\\
Clearly $\tau_n$ is increasing as $n \rightarrow \infty$ and letting $\tau_{\infty}= \lim_{n \rightarrow \infty} \tau_n$, 
we have $\tau_\infty \leq \tau_e $ a.s. 
Hence we basically need to show that $\tau_\infty=\infty$ a.s;  if this were not so, there would exists a pair of constants $T > 0$ and $\epsilon \in (0,1)$ such that
\begin{align*}
\bP \left\{\tau_{\infty} \leq T \right\} > \epsilon.
\end{align*}
Accordingly, there is an integer $n_1 \geq n_0$ such that
\begin{equation}
\label{Peps}
\bP \left\{ \tau_n \leq T \right\} \geq \epsilon \qquad \forall n \geq n_1.
\end{equation}

Now we define a function $V: (0,1)^N \rightarrow \mathbb{R}^+$ as 
\begin{align*}
V(X(t))= - \sum_{i=1}^N \log\left[x_i(t) (1-x_i(t))\right].
\end{align*}
By It\^o's formula we have
\begin{equation}\label{ito}
\begin{aligned}
{\rm d} V(X(t))= &\sum_{i=1}^N \left(\frac{1}{1-x_i} -\frac{1}{x_i}\right)\left[\left( \beta s_i (1-x_i)   - \delta x_i \right) \, {\rm d} t+
\sigma_i(x_i) s_i (1 - x_i) \, {\rm d}w_i(t) \right]
 \\
&\quad
+ \frac{1}{2} \sum_{i=1}^N \left( \frac{1}{(1-x_i)^2} + \frac{1}{x_i^2}\right)\sigma^2(x_i) s_i^2(1-x_i)^2 \, {\rm d}t,
\end{aligned}
\end{equation}
where we hide the explicit dependence on time of the processes $x_i$ and $s_i$.
Let $L$ be the infinitesimal generator associated to the stochastic equation \eqref{sdevect} defined, for $V \in C^\infty(\Delta)$, by
\begin{equation}
\label{gen}
L V(X) = \sum_{i=1}^N f_i(X) \partial_{x_i} V(X) + \frac{1}{2} \sum_{i=1}^N g_{ii}^2(X) \partial^2_{x_i x_i} V(X), \qquad X = (x_1, \dots, x_N); 
\end{equation}
then from \eqref{ito} 
\begin{equation*}
{\rm d} V(X(t)) = L V(X(t)) {\rm d}t + {\rm d}M(t),
\end{equation*}
where $M(t)$ is the (local) martingale defined by
\begin{align*}
M(t) = \sum_{i=1}^N \int_0^t 
\left(\frac{1}{1-x_i(t)} -\frac{1}{x_i(t)}\right)\sigma_i(x_i(t)) s_i(t) (1 - x_i(t)) \, {\mathrm d}w_i(t).
\end{align*}

\begin{lemma}
\label{l:201115.1}
There is a finite constant $K$ such that
$L V(X) \le K$ for every $X \in \Delta$.
\end{lemma}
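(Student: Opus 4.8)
The plan is to compute $LV$ explicitly from \eqref{gen} (it can be read off directly from \eqref{ito}) and to bound, separately and uniformly on $\Delta$, its first-order and second-order contributions. The basic a priori estimate we will use throughout is that on $\Delta$ one has $0 \le s_i = \sum_{j} a_{ij} x_j \le \sum_j a_{ij} =: d_i \le N$, so every occurrence of $s_i$ is controlled.

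First I would treat the drift part. Expanding the relevant term,
\begin{equation*}
\left(\frac{1}{1-x_i} - \frac{1}{x_i}\right)\bigl(\beta s_i (1-x_i) - \delta x_i\bigr) = \beta s_i + \delta - \frac{\delta x_i}{1-x_i} - \frac{\beta s_i (1-x_i)}{x_i}.
\end{equation*}
On $\Delta$ the last two terms are non-positive and may simply be discarded, while $\beta s_i + \delta \le \beta N + \delta$; hence the drift contributes at most $N(\beta N + \delta)$ to $LV(X)$.

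Next I would treat the second-order part, and this is where condition \eqref{cond} does the essential work. From \eqref{cond} we have $\sigma_i(x_i) \le M x_i$ on $(0,1)$, hence $\sigma_i^2(x_i) \le M^2 x_i^2$, and therefore
\begin{equation*}
\frac{1}{2}\left(\frac{1}{(1-x_i)^2} + \frac{1}{x_i^2}\right)\sigma_i^2(x_i)\, s_i^2 (1-x_i)^2 \le \frac{M^2 s_i^2}{2}\bigl(x_i^2 + (1-x_i)^2\bigr) \le \frac{M^2 N^2}{2},
\end{equation*}
using $x_i^2 + (1-x_i)^2 \le 1$ on $(0,1)$. The key point is that the factor $x_i^2$ furnished by \eqref{cond} cancels the singularity $1/x_i^2$ at the face $\{x_i = 0\}$, while the factor $(1-x_i)^2$ already present in the diffusion coefficient $g$ absorbs $1/(1-x_i)^2$ at the face $\{x_i = 1\}$ — which is precisely why $V$ was built from the logarithmic barriers $-\log[x_i(1-x_i)]$.

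Summing over $i = 1, \dots, N$ then gives $LV(X) \le N(\beta N + \delta) + \frac{1}{2} M^2 N^3 =: K$ for every $X \in \Delta$, which is the claim. There is no serious difficulty in this computation; the only delicate point is the behavior of $LV$ near $\partial\Delta$, where both $V$ and the individual summands of $LV$ blow up, and the content of the lemma is exactly that assumption \eqref{cond}, together with the $(1-x_i)$-factor in the diffusion coefficient, forces all of these blow-ups to cancel.
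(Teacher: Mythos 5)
Your proof is correct and follows essentially the same strategy as the paper: bound the drift and diffusion contributions separately, using $\sigma_i(x)\le Mx$ to cancel the $1/x_i^2$ singularity and the explicit $(1-x_i)^2$ factor to cancel $1/(1-x_i)^2$, together with $s_i\le N$. The only (harmless) difference is in the drift term, where the paper maximizes the one-variable function $y(x)=\frac{2x-1}{x(1-x)}(\beta s(1-x)-\delta x)$ by calculus to get $\beta s+\delta-2\sqrt{\delta\beta s}$, whereas you expand and discard the two non-positive terms to get the slightly larger but equally serviceable bound $\beta s+\delta$.
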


We postpone the proof of the lemma and continue to pursue the global existence of the solution.
By the lemma we have
\begin{equation}
\int_{0}^{ \tau_n \wedge T} {\rm d}V(X(t)) 
\leq 
\int_{0}^{ \tau_n \wedge T} K \, {\rm d}t + M(t),
\end{equation}
and taking the expectation 
\begin{equation}\label{diseg}
 \bE [V\left(X(\tau_n \wedge T)\right)] \leq \bE[V(X(0))] + K \, \bE(\tau_n \wedge T)  
 \leq \bE[V(X(0))]  + KT.
\end{equation}

Set $\Omega_n=\left\{\tau_n \leq T\right\}$ for $n \geq n_1$. 
By \eqref{Peps} we have $P(\Omega_n) \geq \epsilon$. 
Since for every $\omega \in \Omega_n$, there is at least one of the $x_i(\tau_n, \omega)$ equaling either $1/n$ or $1-1/n$, then it holds
\begin{equation}\label{diseg2}
V\left( X(\tau_n, \omega)\right) \geq -\left(\log\left(\frac{1}{n}\right) + \log\left(1-\frac1n\right)\right).
\end{equation}
Then from \eqref{diseg} and \eqref{diseg2} it follows that
\begin{equation*}
V(X(0))  + KT \geq \bE \left[\chi_{\Omega_n} V\left(X(\tau_n, \omega)\right)\right] \geq 
\epsilon \left(\log\left(n\right) + 1 \right)
\end{equation*}
where $\chi_{\Omega_n}$ is the indicator function of $\Omega_n$. Letting $n \rightarrow \infty$ we have the following contradiction
\begin{align*}
\infty > V(X(0))  + KT = \infty,
\end{align*}
hence we must have $\tau_\infty=\infty$ a.s. and the proof is complete.
\end{proof}

\begin{proof}[Proof of Lemma \ref{l:201115.1}]
Recall that
\begin{equation*}
L V(X)=  \sum_{i=1}^N \left[\left(\frac{1}{1-x_i} -\frac{1}{x_i}\right)\left( \beta s_i (1-x_i)   - \delta x_i \right) + \frac{1}{2} \left( \frac{1}{(1-x_i)^2} + \frac{1}{x_i^2}\right)\sigma_i^2(x_i) s_i^2 (1- x_i)^2\right];
\end{equation*}
the last term is bounded by
\begin{align*}
\frac{1}{2} \left( \frac{1}{(1-x_i)^2} + \frac{1}{x_i^2}\right)\sigma^2(x_i) s_i^2 (1- x_i)^2
\le \frac{1}{2} \left( \frac{x_i^2 + (1-x_i)^2}{(1-x_i)^2 \, x_i^2}\right)M^2 \, x_i^2 \, s_i^2 (1- x_i)^2
\le M^2 (N-1)^2.
\end{align*}
The first term is given by
\begin{align*}
\left(\frac{1}{1-x_i} -\frac{1}{x_i}\right)\left( \beta s_i (1-x_i)   - \delta x_i \right) 
= \frac{2x_i - 1}{x_i (1-x_i)} \left( \beta s_i (1-x_i)   - \delta x_i \right);
\end{align*}
since the function 
\begin{align*}
y(x) = \frac{2x - 1}{x (1-x)} \left( \beta s (1-x)   - \delta x \right), \qquad x \in (0,1)
\end{align*}
has a maximum in
$x_m = \frac{\sqrt{\beta s}}{\sqrt{\beta s} + \sqrt{\delta}}$
that is
\begin{align*}
y(x_m) = \beta s + \delta - 2 \sqrt{\delta \beta s},
\end{align*}
in our framework, since $s_i = \sum a_{ij} x_j \le N-1$, we finally get
\begin{align}\label{e:201115.6}
L V(X) \le N \left[ \beta (N - 1) + \delta + M^2 (N-1)^2 \right]
\end{align}
so the claim follows with a constant $K$ given by the right-hand side of \eqref{e:201115.6}.
\end{proof}

\section{Long time properties of the zero solution}
\label{s3}

Now we provide an analysis of the stability of the zero solution, i.e. the disease-free equilibrium, in order to identify the threshold condition for controlling the infection or eventually eradicating it.
\\
Let $X_0=0$ be the vector of all zero
components and let us consider the equation \eqref{sdevect}. 
Since $f(X_0)=0$ and $g(X_0)=0$ for all $t \geq 0$, it follows that the unique solution of \eqref{sdevect} satisfying the initial condition $X(0)=X_0$ is the identically zero solution $X(t)=X_0$. 
For the definitions and conditions on the stability of the zero solution we refer to \cite[Chapter 11]{Arnold1974}.

%
\begin{remark}
The contact matrix $A$, that is the adjacency matrix of an undirected graph, is symmetric and
satisfies
\begin{equation}
\label{e:221115.1}
\begin{aligned}
\langle A X,& X \rangle \le \lambda_1(A) |X|^2,
\\
\langle A X,& A X \rangle \le \lambda_1(A)^2 |X|^2
\end{aligned}
\end{equation}
for every $X \in \bR^N$.
\end{remark}

\begin{theorem}\label{THMzero}
Recall that $M$ is the constant from \eqref{cond}. If
\begin{align}
\label{e:221115.2}
\delta > \beta \lambda_1(A) + \frac{1}{32}M^2 \lambda_1(A)^2
\end{align}
then the null solution for \eqref{sdevect},
$X(t) = X_0$, is stochastically asymptotically stable in the large in $(0,1)^N$. 
This means that  $X_0$ is stochastically stable and
\begin{align*}
 \bP \left[\lim_{t \rightarrow \infty} X(t)=0\right]=1,
\end{align*}
for all $X(0) \in (0,1)^N$.
\end{theorem}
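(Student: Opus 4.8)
The plan is to run a Lyapunov-function argument in the style of Khasminskii: it suffices to exhibit a $C^2$, positive-definite function $V$ on $\Delta$ that vanishes only at the origin and for which the generator \eqref{gen} satisfies $LV(X)\le -\theta V(X)$ on $\Delta$ for some constant $\theta>0$, and then to invoke the classical criterion for stochastic asymptotic stability in the large (\cite[Chapter~11]{Arnold1974}, \cite{Khasminskii1980}). Theorem~\ref{th:1} has already removed every explosion and boundary concern: the solution issued from $\Delta$ is global and never leaves $\Delta$, so the coefficients stay bounded along trajectories and the generator identity is legitimate on all of $\Delta$; to take expectations of the local martingale part one localizes along the stopping times $\tau_n$ of the proof of Theorem~\ref{th:1} and lets $n\to\infty$, exactly as before.

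The function I would use is the squared Euclidean norm $V(X)=|X|^2=\sum_{i=1}^N x_i^2$. Since $\partial_{x_i}V=2x_i$ and $\partial^2_{x_ix_i}V=2$, formula \eqref{gen} gives
\begin{equation*}
LV(X)=2\sum_{i=1}^N x_i f_i(X)+\sum_{i=1}^N g_{ii}^2(X)=2\beta\sum_{i=1}^N x_i s_i(1-x_i)-2\delta|X|^2+\sum_{i=1}^N \sigma_i^2(x_i)(1-x_i)^2 s_i^2 .
\end{equation*}
For the drift contribution, on $\Delta$ one has $0<1-x_i<1$ and $s_i=\sum_j a_{ij}x_j\ge 0$, whence $\sum_i x_i s_i(1-x_i)\le\sum_i x_i s_i=\langle AX,X\rangle\le\lambda_1(A)|X|^2$ by the first inequality in \eqref{e:221115.1}. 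For the diffusion contribution, condition \eqref{cond} gives $\sigma_i(x_i)\le Mx_i$, and since $x(1-x)\le\tfrac{1}{4}$ on $(0,1)$ we get $x_i^2(1-x_i)^2\le\tfrac{1}{16}$; hence
\begin{equation*}
\sum_{i=1}^N \sigma_i^2(x_i)(1-x_i)^2 s_i^2\le \frac{M^2}{16}\sum_{i=1}^N s_i^2=\frac{M^2}{16}|AX|^2\le\frac{M^2}{16}\lambda_1(A)^2|X|^2
\end{equation*}
by the second inequality in \eqref{e:221115.1}. Combining,
\begin{equation*}
LV(X)\le -\Bigl(2\delta-2\beta\lambda_1(A)-\tfrac{1}{16}M^2\lambda_1(A)^2\Bigr)|X|^2=-\theta\,V(X),
\end{equation*}
and the coefficient $\theta=2\delta-2\beta\lambda_1(A)-\tfrac{1}{16}M^2\lambda_1(A)^2$ is strictly positive precisely under hypothesis \eqref{e:221115.2}.

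With $LV\le-\theta V$, $\theta>0$, and $V$ positive definite, the statement follows from the quoted theorem. Should a self-contained conclusion be preferred: by It\^o's formula $e^{\theta t}V(X(t))$ is a nonnegative local supermartingale, hence a true supermartingale, hence converges almost surely to a finite limit; since $e^{\theta t}\to\infty$ this forces $V(X(t))\to 0$, i.e.\ $X(t)\to 0$ a.s.\ for every $X(0)\in(0,1)^N$, while stochastic stability near the origin comes from the supermartingale maximal inequality applied to $V$.

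I do not anticipate a serious obstacle; the single point requiring care is the sharp constant. Bounding the diffusion term crudely by $\sum_i x_i^2(1-x_i)^2 s_i^2\le\sum_i s_i^2$ would yield only the weaker threshold $\delta>\beta\lambda_1(A)+\tfrac{1}{2}M^2\lambda_1(A)^2$; it is exactly the elementary inequality $\max_{0\le x\le1}x^2(1-x)^2=\tfrac{1}{16}$ that produces the factor $\tfrac{1}{32}$ in \eqref{e:221115.2}. The same computation with $V(X)=|X|^p$, $0<p<2$, also works, after discarding the manifestly nonpositive second-order term, and gives almost sure exponential decay at rate $\tfrac{p}{2}\theta$; it does not improve the threshold, so the quadratic choice $p=2$ — for which that term vanishes identically — is the most economical.
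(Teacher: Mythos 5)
Your proposal is correct and follows essentially the same route as the paper: the quadratic Lyapunov function $V(X)=|X|^2$, the drift bound via $\langle AX,X\rangle\le\lambda_1(A)|X|^2$, the diffusion bound via $\sigma_i(x)\le Mx$ and $x^2(1-x)^2\le\tfrac{1}{16}$ together with $|AX|^2\le\lambda_1(A)^2|X|^2$, and the conclusion from the Khasminskii/Arnold criterion once $LV\le -\theta V$ with $\theta>0$ under \eqref{e:221115.2}. The self-contained supermartingale argument and the remark on $|X|^p$ are harmless additions beyond what the paper does.
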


\begin{proof}
Let us define the Lyapunov function $V: (0,1)^N \rightarrow \mathbb{R}_+=[0, \infty)$
\begin{align*}
V(X)= |X|^2;
\end{align*}
recalling the definition of the infinitesimal generator $L$ in \eqref{gen} and
setting (compare \eqref{e:090615-1})
\begin{align*}
s_i = \sum_{j=1}^N a_{ij}x_j
\end{align*}
we have
\begin{align*}
L V(X) = 2 \beta \sum_{i=1}^N x_i  s_i - 2 \delta |X|^2  -2 \beta \sum_{i=1}^N  x_i^2 s_i + \sum_{i=1}^N \sigma(x_i)^2 (1-x_i)^2 s_i^2.
\end{align*}
Since it holds that
$$x(1-x) \leq \frac14,$$
{we have} from \eqref{e:221115.1} and condition \eqref{cond} that 
\begin{equation}
L V(X) \leq \left( 2 \beta \lambda_1(A) - 2 \delta + \frac{1}{16}M^2 \lambda_1(A)^2 \right) |X|^2.
\end{equation}
In order to conclude, {we shall impose that $C= 2 \beta \lambda_1(A) - 2 \delta + \frac{1}{16}M^2 \lambda_1(A)^2$ 
is} 
strictly negative, i.e.,
\begin{align*}
\delta > \beta \lambda_1(A) + \frac{1}{32}M^2 \lambda_1(A)^2
\end{align*}
as required.
Then under this assumption we have that
\begin{align*}
L V(X) \leq C V(X).
\end{align*}
and by \cite[Theorem11.2.8)]{Arnold1974} $X_0$ is stochastically asymptotically stable in the large in $(0,1)^N$.
\end{proof}

\section{Stochastic permanence}
\label{s2.1}

We obtain, from Theorem \ref{th:1}, that the solution exists for all times and that it remains 
in $\Delta$ definitely. 
However, this property is too weak for the applications, so we search for further details about the asymptotic behavior of the solution.
First, we recall the following definition from \cite{Li2009}.

\begin{definition}
Equation \eqref{e:090615-1} (equivalently, \eqref{sdevect}) is said to be stochastically permanent if for any $\varepsilon > 0$ there exists a constant
$\chi = \chi(\varepsilon)$ such that, for any initial condition $X(0) = (x_1(0), \ldots, x_N(0)) \in \Delta$, the solution satisfies
\begin{align}
\label{e:201115.perm}
\liminf_{t \to \infty} \bP(|X(t)| \ge \chi) \ge 1-\varepsilon.
\end{align}
\end{definition}

At first, we prove a result that seems interesting on its own.

\begin{theorem}
Assume that
\begin{equation}
\label{e:201115.cond}
\delta < \lambda_1(A) \, \beta - \frac{1}{32} M^2 \lambda_1(A)^2. 
\end{equation}
Then,
for any initial condition $X(0) \in \Delta$, the solution $X(t)$ satisfies
\begin{equation}
\label{e:stima_sopra}
\sup_{t > 0} \bE\left[\frac{1}{|X(t)|^\alpha}\right] \le C
\end{equation}
where $\alpha > 0$ is small enough to have
\begin{align*}
\delta < \lambda_1(A) \, \beta - \frac{\alpha+1}{32} M^2 \lambda_1(A)^2
\end{align*}
and $C$ is a finite constant depending on 
$\alpha$, the initial condition $X(0)$, the adjacency matrix $A$ and the rates $\beta$ and $\delta$.
\end{theorem}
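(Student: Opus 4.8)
The plan is to use a Lyapunov function built from the Perron--Frobenius eigenvector of $A$, so that condition \eqref{e:201115.cond} turns into a strictly dissipative drift for that function. First fix $\alpha\in(0,1)$ small enough that $\beta\lambda_1(A)-\delta>\tfrac{\alpha+1}{32}M^2\lambda_1(A)^2$; this is possible by continuity, since at $\alpha=0$ the inequality is exactly \eqref{e:201115.cond}. Let $v=(v_1,\dots,v_N)>0$ be the eigenvector of $A$ associated with $\lambda_1(A)$, normalised by $|v|=1$, set $v_{\min}=\min_i v_i>0$, and $Y(t)=\langle X(t),v\rangle=\sum_{i=1}^N v_i x_i(t)$. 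By Theorem~\ref{th:1} the solution remains in $\Delta$, so $Y(t)\in(0,\sqrt N\,]$ for all $t\ge 0$; moreover, by Cauchy--Schwarz, $Y\le|v|\,|X|=|X|$, so the function $V(X)=\langle X,v\rangle^{-\alpha}$ satisfies $|X|^{-\alpha}\le V(X)$, and it suffices to prove $\sup_{t>0}\bE[V(X(t))]\le C$.

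The core computation is $LV$ via \eqref{gen}. Since $v$ is constant,
\begin{equation*}
LV(X)=-\alpha Y^{-\alpha-1}\langle f(X),v\rangle+\frac{\alpha(\alpha+1)}{2}\,Y^{-\alpha-2}\sum_{i=1}^N v_i^2\,\sigma_i(x_i)^2(1-x_i)^2 s_i^2 .
\end{equation*}
By symmetry of $A$, $\sum_i v_i s_i=\langle Av,X\rangle=\lambda_1(A)Y$, hence $\langle f(X),v\rangle=(\beta\lambda_1(A)-\delta)Y-\beta\sum_i v_i s_i x_i$. Two elementary facts then close the estimate. \emph{(i)} From $Av=\lambda_1(A)v$ with $A=A^{\mathsf T}\ge 0$ one gets $a_{ij}v_i\le\lambda_1(A)v_j$ for all $i,j$; therefore $v_i s_i=\sum_j (a_{ij}v_i)x_j\le\lambda_1(A)Y$ for every $i$, and so $\sum_i v_i^2 s_i^2\le(\max_i v_i s_i)\sum_i v_i s_i\le\lambda_1(A)^2Y^2$. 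Combined with \eqref{cond} ($\sigma_i(x)\le Mx$) and $x^2(1-x)^2\le\tfrac1{16}$, this bounds the diffusion part of $LV$ by $\tfrac{\alpha(\alpha+1)}{32}M^2\lambda_1(A)^2\,V(X)$. \emph{(ii)} $\sum_i v_i s_i x_i\le(\max_i x_i)\sum_i v_i s_i\le\lambda_1(A)v_{\min}^{-1}Y^2$, because $\max_i x_i\le v_{\min}^{-1}Y$. Writing $\eta=\alpha\big(\beta\lambda_1(A)-\delta-\tfrac{\alpha+1}{32}M^2\lambda_1(A)^2\big)$, which is strictly positive by the choice of $\alpha$, we obtain
\begin{equation*}
LV(X)\le-\eta\,V(X)+\frac{\alpha\beta\lambda_1(A)}{v_{\min}}\,Y^{-\alpha+1}\qquad\text{on }\Delta .
\end{equation*}
Since $0<\alpha<1$ and $0<Y\le\sqrt N$, the last term is bounded on $\Delta$, so $LV\le-\eta V+K$ there for some finite constant $K$.

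It then remains to run the standard argument. From the inequality above, $\mathrm d\big(e^{\eta t}V(X(t))\big)\le Ke^{\eta t}\,\mathrm dt+\mathrm dM(t)$, where $M$ is a local martingale. Stopping at the stopping times $\tau_n$ from the proof of Theorem~\ref{th:1}—on $[0,\tau_n]$ the process $V(X(\cdot))$ is bounded, so $\bE[M(t\wedge\tau_n)]=0$—taking expectations, and letting $n\to\infty$ (using $V\ge 0$, $\tau_n\uparrow\infty$ a.s., and Fatou's lemma), we get $\bE[V(X(t))]\le e^{-\eta t}V(X(0))+\tfrac{K}{\eta}\le V(X(0))+\tfrac{K}{\eta}$ for every $t>0$. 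Hence
\begin{equation*}
\sup_{t>0}\bE\!\left[\frac{1}{|X(t)|^\alpha}\right]\le\sup_{t>0}\bE[V(X(t))]\le\langle X(0),v\rangle^{-\alpha}+\frac{K}{\eta}=:C ,
\end{equation*}
and $C$ depends only on $\alpha$, on $X(0)$, on $A$ (through $v$, $\lambda_1(A)$ and $v_{\min}$), and on $\beta,\delta$ (and the noise level $M$), as required.

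The step I expect to be the main obstacle is the Perron--Frobenius inequality $a_{ij}v_i\le\lambda_1(A)v_j$ together with its consequence $\sum_i v_i^2 s_i^2\le\lambda_1(A)^2\langle X,v\rangle^2$: this is exactly what upgrades the crude pointwise bound on the diffusion coefficient (via \eqref{cond} and $x^2(1-x)^2\le\tfrac1{16}$) to the sharp constant $\tfrac1{32}M^2\lambda_1(A)^2$ that mirrors the extinction threshold \eqref{e:221115.2}; using $|X|$ or $\sum_i x_i$ in place of $\langle X,v\rangle$ would give a worse coefficient. A secondary difficulty is the localisation needed to legitimately pass from the Itô formula to the expectation bound (one cannot assume a priori that $\bE[V(X(t))]<\infty$), and one must verify that the cross term $\beta\sum_i v_i s_i x_i$ is genuinely of lower order in $\langle X,v\rangle$, which relies on each $x_i\le v_{\min}^{-1}\langle X,v\rangle$ being small whenever $\langle X,v\rangle$ is small.
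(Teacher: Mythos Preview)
Your proof is correct and follows the same strategy as the paper: take the Perron eigenvector $u$, form a Lyapunov function based on $\langle u,X\rangle^{-\alpha}$, multiply by an exponential time weight, apply It\^o's formula, and show that the leading coefficient is strictly negative exactly under the hypothesis on $\delta$. The implementation differs in a few places. The paper works with $(1+\psi)^\alpha$, $\psi=\langle u,X\rangle^{-1}$, rather than $\psi^\alpha$ directly; this produces a three--term polynomial in $\psi$ and does not require the restriction $\alpha<1$ that you impose to keep $Y^{1-\alpha}$ bounded. For the diffusion term the paper uses the elementary inequality $\sum_i(u_is_i)^2\le(\sum_i u_is_i)^2=\lambda_1(A)^2\langle u,X\rangle^2$, which is simpler than (but equivalent in outcome to) your Perron--Frobenius bound $a_{ij}v_i\le\lambda_1(A)v_j$. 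For the quadratic drift term your estimate $\sum_i v_is_ix_i\le\lambda_1(A)v_{\min}^{-1}Y^2$, via $\max_i x_i\le v_{\min}^{-1}Y$, is in fact cleaner than the paper's: the paper bounds this term by $\beta\lambda_1(A)\psi^{-2}$ through the step $|X|_2^2\le\langle u,X\rangle^2$, which is not correct as written (take $u$ uniform), though since this term only enters the inessential constant $c_0$ the conclusion is unaffected.
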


\begin{proof}
Let $u$ be the Perron eigenvector of the $N \times N$ adjacency matrix $A$, i.e., it is the eigenvector corresponding to the spectral radius $\lambda_1(A)$, and the unique one such that $u > 0$ and $\left\| u\right\|_1=1$ \cite{horn}.
Consider the function 
\begin{align*}
\psi(X) 
= \frac{1}{\sum\limits_{i=1}^N u_i x_i};
\end{align*}
by It\^o's formula the process $Y(t) = \psi(X(t))$ satisfies
\begin{align*}
{\rm d}Y(t) = L\psi(X(t)) \, {\rm d}t + {\rm d}M(t),
\end{align*}
where $M(t)$ is a (local) martingale and $L$ is the infinitesimal generator of the diffusion $X(t)$, defined in \eqref{gen}.
We may compute
\begin{align*}
L \psi(X) =& \sum_{i=1}^N u_i f_i(X) \partial_{x_i} \psi(X) + \frac12 \sum_{i=1}^N u_i^2 g_{ii}^2(X) \partial^2_{x_i x_i} \psi(X)
=
- \sum_{i=1}^N f_i(X) \psi^2(X) + \sum_{i=1}^N g_{ii}^2(X) \psi^3(X), 
\\
&\qquad X = (x_1, \dots, x_N), \quad \psi \in C^\infty(\Delta).
\end{align*}

Next, we introduce the process
\begin{align}
\label{e:201115.2}
Z(t) = e^{\kappa t}(1 + \psi(X(t)))^\alpha,
\end{align}
where $\kappa$ is a positive constant to be chosen later.
Again by appealing to It\^o's formula we have
\begin{equation}
\label{e:201115.0}
\begin{aligned}
{\rm d}Z(t) =& \kappa Z(t) \, {\rm d}t 
+ \alpha e^{\kappa t} (1 + \psi(X(t)))^{\alpha-1} \left[- \psi^2(X(t))
\sum_{i=1}^N u_i f_i(X(t)) +  \psi^3(X(t))\sum_{i=1}^N u_i^2 g_{ii}^2(X(t))\right] \, {\rm d}t \\
&+ \frac12 \alpha(\alpha-1) e^{\kappa t} (1 + \psi(X(t)))^{\alpha-2}
\psi^4(X(t))\sum_{i=1}^N u_i^2 g_{ii}^2(X(t)) 
\, {\rm d}t
\\
&+ {\rm d}\tilde M(t).
\end{aligned}
\end{equation}
 Let us consider

\begin{align}
\label{e:201115.1}
-\sum_{i=1}^N u_i f_i(X(t)) 
= -\sum_{i=1}^N \beta u_i s_i(t) + \sum_{i=1}^N \beta u_i s_i(t) x_i(t) + \sum_{i=1}^N \delta u_i x_i(t).
\end{align}
Since $u \ge 0$, $|u|_1=1$, from \eqref{e:221115.1} we have
\begin{align*}
\beta \sum_{i=1}^N u_i s_i(t) x_i(t) & = \beta \sum_{i,j=1}^N a_{ij} x_j(t) u_i x_i(t) \leq  \beta \sum_{i,j=1}^N a_{ij} x_j(t)  x_i(t)\\
& = \beta \langle A X(t), X(t) \rangle \le 
\beta \lambda_1(A) \, |X(t)|_2^2 \le \beta \lambda_1(A) \, \psi(X(t))^{-2},
\end{align*}
moreover 

\begin{align*}
-\sum_{i=1}^N \beta \sum_{j=1}^N u_i a_{ij} x_j(t) + \sum_{i=1}^N \delta u_i x_i(t) &=- \beta \langle u, A X(t) \rangle + \delta \langle u,  X(t) \rangle  = - \beta \langle A^Tu,  X(t) \rangle  + \delta \langle u,  X(t) \rangle \\
 & =  \left(- \beta \, \lambda_1(A) + \delta\right) \psi^{-1}(X(t)).
\end{align*}
Using these estimates in \eqref{e:201115.1} we get
\begin{align}
\label{e:201115.3}
- \sum_{i=1}^N f_i(X(t)) 
\le \beta \lambda_1(A) \, \psi^{-2}(X(t)) + \left(- \beta \, \lambda_1(A) + \delta\right) \psi^{-1}(X(t)).
\end{align}
Next, we consider 
\begin{align*}
\sum_{i=1}^N  u_i^2 g_{ii}^2 =\sum_{i=1}^N u_i^2 \left[ \sigma_i(x_i(t)) s_i(t) (1-x_i(t)) \right]^2;
\end{align*}
by Theorem \ref{th:1} we already know that $x_i(t) \in (0,1)$, then we have 
$x(1-x) \leq  1/4$,
hence the
previous sum is bounded by
\begin{align*}
\frac{M^2}{16}  \, \sum_{i=1}^N  u_i^2 \left[   \sum_{j=1}^N a_{ij} x_j(t) \right]^2 &  \le \frac{M^2}{16} \left[ \sum_{i=1}^N \left( u_i \sum_{j=1}^N  a_{ij} x_j(t)\right) \right]^2 = \frac{M^2}{16}
 \langle u, A X(t) \rangle^2 = \frac{M^2}{16} \lambda_1^2(A) \psi(X(t))^{-2},
\end{align*}


where $M$ is the constant in \eqref{cond}.
We have thus from \eqref{e:201115.0}, integrating in $(0,t)$ and taking expectation
\begin{multline}
\label{e:201115.4}
\bE [Z(t)] - \bE[Z(0)] \le 
\alpha \, \bE \int_0^t e^{\kappa s} (1 + \psi(X(s))^{\alpha-2}
\\
\cdot \left\{   \left(\beta \lambda_1(A) + \frac{\kappa}{\alpha} \right) 
+
  \left( 2 \frac{\kappa}{\alpha}  + \delta
+ \frac{M^2}{16} \lambda_1(A)^2 \right)\psi(X(s))
\right.
\\
\left.
\phantom{+ \alpha \, \bE \int_0^t e^{\kappa s} (1 + \psi(X(s))^{\alpha-2}
\{  \lambda_1(A)}
+
\left(\frac{\kappa}{\alpha} - \beta \, \lambda_1(A) + \delta + \frac{(\alpha+1)}{32}M^2 \lambda_1(A)^2\right) \psi^2(X(s)) 
 \right\} \, {\rm d}s.
\end{multline}
Choose $\kappa$ small enough to have
\begin{align*}
\delta < \lambda_1(A) \, \beta - \frac{\alpha+1}{32} M^2 \lambda_1(A)^2 - \frac{\kappa}{\alpha};
\end{align*}
notice that $\psi(X(s)) \ge \frac{1}{N}$, and the function
\begin{align*}
(1 + x)^{\alpha-2} (c_0 + c_1 x - c_2 x^2)
\end{align*}
satisfies, on that interval,
\begin{align*}
(1 + x)^{\alpha-2} (c_0 + c_1 x - c_2 x^2)
\le H < +\infty
\end{align*}
for every choice of $c_0, c_1 \in \bR$ and $c_2 > 0$ and for some positive and finite constant $H$. Thus we obtain the inequality
\begin{align*}
\bE [Z(t)] \le \bE[Z(0)] 
+ \frac{\alpha H}{\kappa} e^{\kappa t}
\end{align*}
and recalling definition \eqref{e:201115.2} it follows
\begin{align}\label{e:201115.5}
\bE[(1 + \psi(X(t)))^\alpha] \le e^{-\kappa t} \bE[Z(0)] + \frac{\alpha H}{\kappa}.
\end{align}

Next, observe the estimate $\psi^{-1}(X) = \langle u,X \rangle \le |u| |X| \le |X|$, hence
$\psi(X)^{\alpha} \ge |X|^{-\alpha}$.
Thus, by using \eqref{e:201115.5} and taking the supremum in $t > 0$,
\begin{align*}
\sup_{t > 0} \bE\left[\frac{1}{|X(t)|^\alpha}\right] \le 
\sup_{t > 0} \bE\left[ \psi^\alpha(X(t)) \right] \le
\sup_{t > 0} \bE\left[ (1 + \psi(X(t)))^\alpha \right] 
\le
 \left( \bE[(1 + \psi(X(0)))^\alpha] + \frac{\alpha H}{\kappa} \right)
\end{align*}
as required.
\end{proof}

The main result in this section is the following.

\begin{theorem}\label{THMperm}
Assume that condition \eqref{e:201115.cond} holds.
Then the solution of the system \eqref{sdevect} is stochastically permanent.
\end{theorem}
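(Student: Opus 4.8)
The plan is to read this off as an immediate consequence of the moment bound established in the previous theorem, via a Chebyshev/Markov argument. Concretely, condition \eqref{e:201115.cond} is precisely the hypothesis needed there, so we may fix $\alpha > 0$ small enough that
\begin{align*}
\delta < \lambda_1(A)\,\beta - \frac{\alpha+1}{32}M^2\lambda_1(A)^2,
\end{align*}
and obtain a finite constant $C$ (depending only on $\alpha$, $X(0)$, $A$, $\beta$, $\delta$) with
\begin{align*}
\sup_{t > 0} \bE\left[\frac{1}{|X(t)|^\alpha}\right] \le C.
\end{align*}

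The key step is then a single application of Markov's inequality. For any $\chi > 0$ and any $t > 0$,
\begin{align*}
\bP\bigl(|X(t)| < \chi\bigr) = \bP\!\left(\frac{1}{|X(t)|^\alpha} > \frac{1}{\chi^\alpha}\right) \le \chi^\alpha\, \bE\!\left[\frac{1}{|X(t)|^\alpha}\right] \le C\chi^\alpha.
\end{align*}
Hence, given $\varepsilon > 0$, I would choose $\chi = \chi(\varepsilon) := (\varepsilon/C)^{1/\alpha}$, so that $\bP(|X(t)| < \chi) \le \varepsilon$, equivalently $\bP(|X(t)| \ge \chi) \ge 1 - \varepsilon$, for \emph{every} $t > 0$. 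Taking $\liminf_{t\to\infty}$ yields \eqref{e:201115.perm}, which is exactly the definition of stochastic permanence. Note the bound holds uniformly in $t$, so the $\liminf$ is in fact a genuine infimum over $t>0$; this is stronger than required.

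There is essentially no obstacle here: all the analytic difficulty — the It\^o computation with the auxiliary process $Z(t) = e^{\kappa t}(1+\psi(X(t)))^\alpha$, the spectral estimates \eqref{e:221115.1}, the boundedness of $(1+x)^{\alpha-2}(c_0 + c_1 x - c_2 x^2)$ on $[1/N,\infty)$, and the uniform bound on $\psi^{-1}(X) \le |X|$ — is already absorbed into the preceding theorem. The only minor point to state carefully is that one uses the \emph{same} $\alpha$ and $C$ furnished by that theorem under hypothesis \eqref{e:201115.cond}, and that $\chi$ depends on $\varepsilon$ through both of them; this is harmless since $\alpha$ and $C$ are fixed once the model parameters are fixed. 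I would keep the proof to just these few lines.

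\begin{proof}
Under hypothesis \eqref{e:201115.cond}, the previous theorem provides $\alpha > 0$ and a finite constant $C$ such that $\sup_{t > 0} \bE[|X(t)|^{-\alpha}] \le C$, for every initial condition $X(0) \in \Delta$. Fix $\varepsilon > 0$ and set $\chi = \chi(\varepsilon) := (\varepsilon/C)^{1/\alpha}$. By Markov's inequality, for every $t > 0$,
\begin{align*}
\bP\bigl(|X(t)| < \chi\bigr) = \bP\!\left(\frac{1}{|X(t)|^{\alpha}} > \frac{1}{\chi^{\alpha}}\right) \le \chi^{\alpha}\, \bE\!\left[\frac{1}{|X(t)|^{\alpha}}\right] \le C\chi^{\alpha} = \varepsilon.
\end{align*}
Therefore $\bP(|X(t)| \ge \chi) \ge 1 - \varepsilon$ for all $t > 0$, and in particular
\begin{align*}
\liminf_{t \to \infty} \bP\bigl(|X(t)| \ge \chi\bigr) \ge 1 - \varepsilon.
\end{align*}
Since $\varepsilon > 0$ was arbitrary, the system \eqref{sdevect} is stochastically permanent.
\end{proof}
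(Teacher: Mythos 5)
Your proof is correct and follows essentially the same route as the paper: both deduce stochastic permanence directly from the uniform moment bound \eqref{e:stima_sopra} via Markov's inequality applied to $|X(t)|^{-\alpha}$, choosing $\chi = (\varepsilon/C)^{1/\alpha}$. The observation that the bound is uniform in $t$ (so the $\liminf$ is actually an infimum) also matches the paper's argument.
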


\begin{proof}
The proof follows from a simple application of Markov's inequality.
Let us estimate
\begin{align*}
\bP(|X(t)| < \chi) 
\end{align*}
for some $\chi$ to be chosen. Then
\begin{align*}
\bP(|X(t)| < \chi) 
= \bP \left(\frac{1}{|X(t)|} > \frac{1}{\chi} \right) \le \frac{\bE\left[1/|X(t)|^\alpha\right]}{1/\chi^\alpha}
\le C \chi^\alpha,
\end{align*}
where $C$ is the constant from \eqref{e:stima_sopra}. 
The above inequality holds by taking the supremum:
\begin{align*}
\sup_{t > 0} \bP(|X(t)| < \chi) 
\le C \chi^\alpha,
\end{align*}
and therefore
\begin{align*}
\inf_{t > 0} \bP(|X(t)| \ge \chi) \ge 1- C \chi^\alpha.
\end{align*}
Since for every $\varepsilon > 0$ we can
find $\chi = (\varepsilon/C)^{1/\alpha}$, inequality \eqref{e:201115.perm} is satisfied, as required.
\end{proof}

{By appealing to Theorems \ref{THMzero} and \ref{THMperm},
we can formulate both the conditions, proving that the null solution is asymptotically stable, and that the system is stochastically permanent in terms of the ratio
$\beta/\delta$.} Thus, the null solution is asymptotically stable in the large provided that
\begin{equation}\label{thrstoc}
\tau=\frac{\beta}{\delta} < \tau_c^s := \frac{1}{\lambda_1(A)} - \frac{M^2 \lambda_1(A)}{32 \delta} = \tau_{c}^{(1)} - \frac{M^2 \lambda_1(A)}{32 \delta}.
\end{equation}
The solution of \eqref{sdevect} is stochastically permanent if
\begin{align}
\tau= \frac \beta\delta > \tau^s_p := \frac{1}{\lambda_1(A)} + \frac{M^2 \lambda_1(A)}{32 \delta}=\tau_{c}^{(1)}+ \frac{M^2 \lambda_1(A)}{32 \delta},
\end{align}
{where $\tau_{c}^{(1)}$ is the epidemic threshold of the NIMFA model (see Section \ref{mfa}).} 
{We see that 
there is a gap between the regions where the effective infection rate $\tau$ leads to extinction or persistence, respectively, whose extension depends on the intensity of the noise, through the parameter $M$.}
We underline however that both Theorem \ref{THMzero} and \ref{THMperm} give us only sufficient conditions.

\begin{figure}[t]	
\centering
\includegraphics[width=0.8\textwidth]{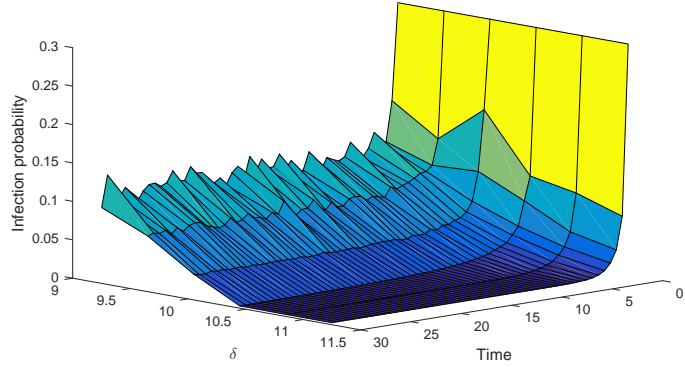}
\caption{{Dynamics of the infection probability of the node $4$ for different values of $\delta$, with fixed vales of $\beta=0.2$ and $M=0.08$. The sample network has an arbitrary
topology with $N=80$}.}\label{fig:surf}
\end{figure}

\begin{figure}[h]
        \centering
        \begin{minipage}[c]{.3\textwidth}
          \subfigure[{}]
         { \includegraphics[width=\textwidth]{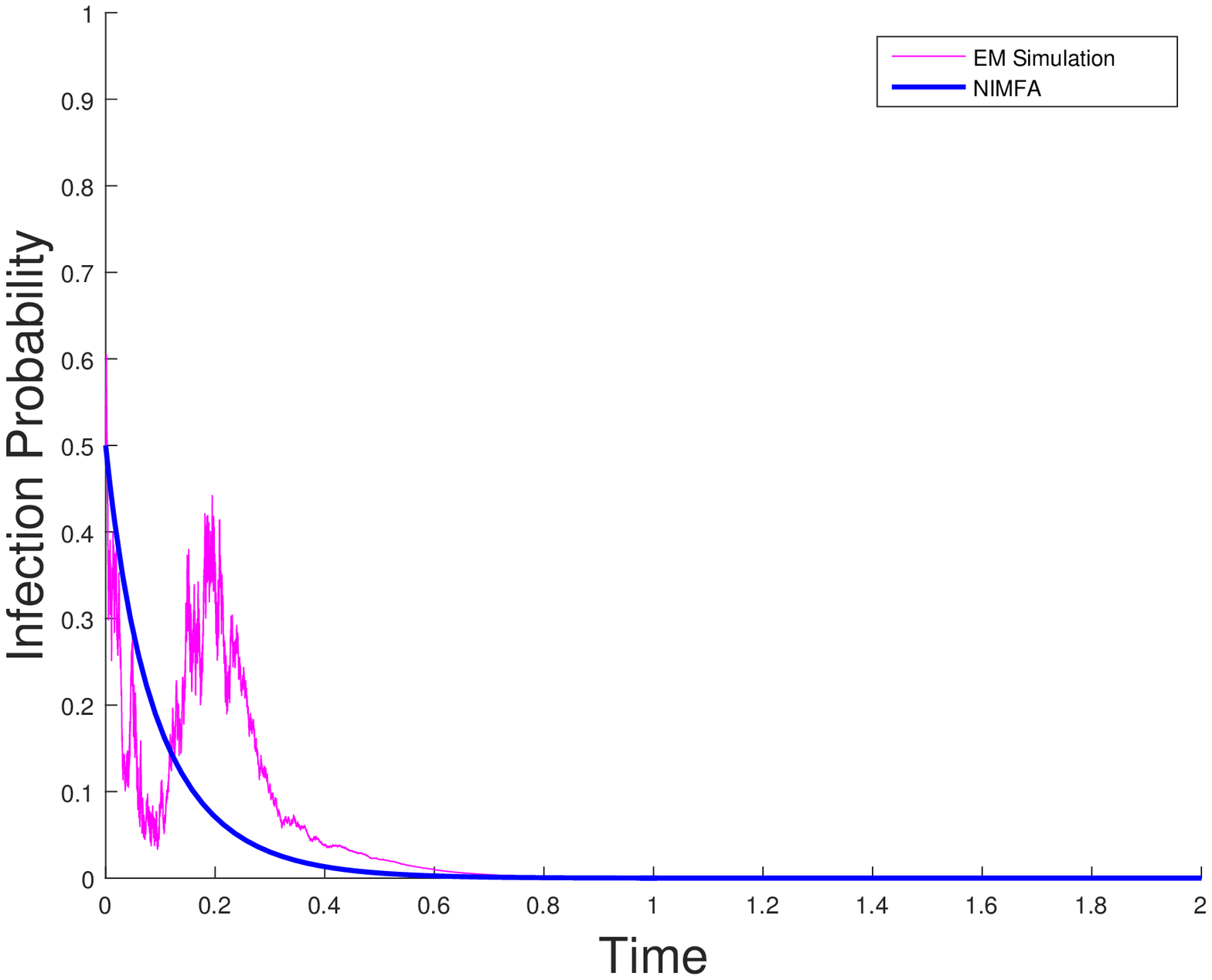}}
        \end{minipage}%
\hspace{-0.5cm}
      \begin{minipage}[c]{.3\textwidth}
	 
         \subfigure[{}]
         {  \includegraphics[width=\textwidth]{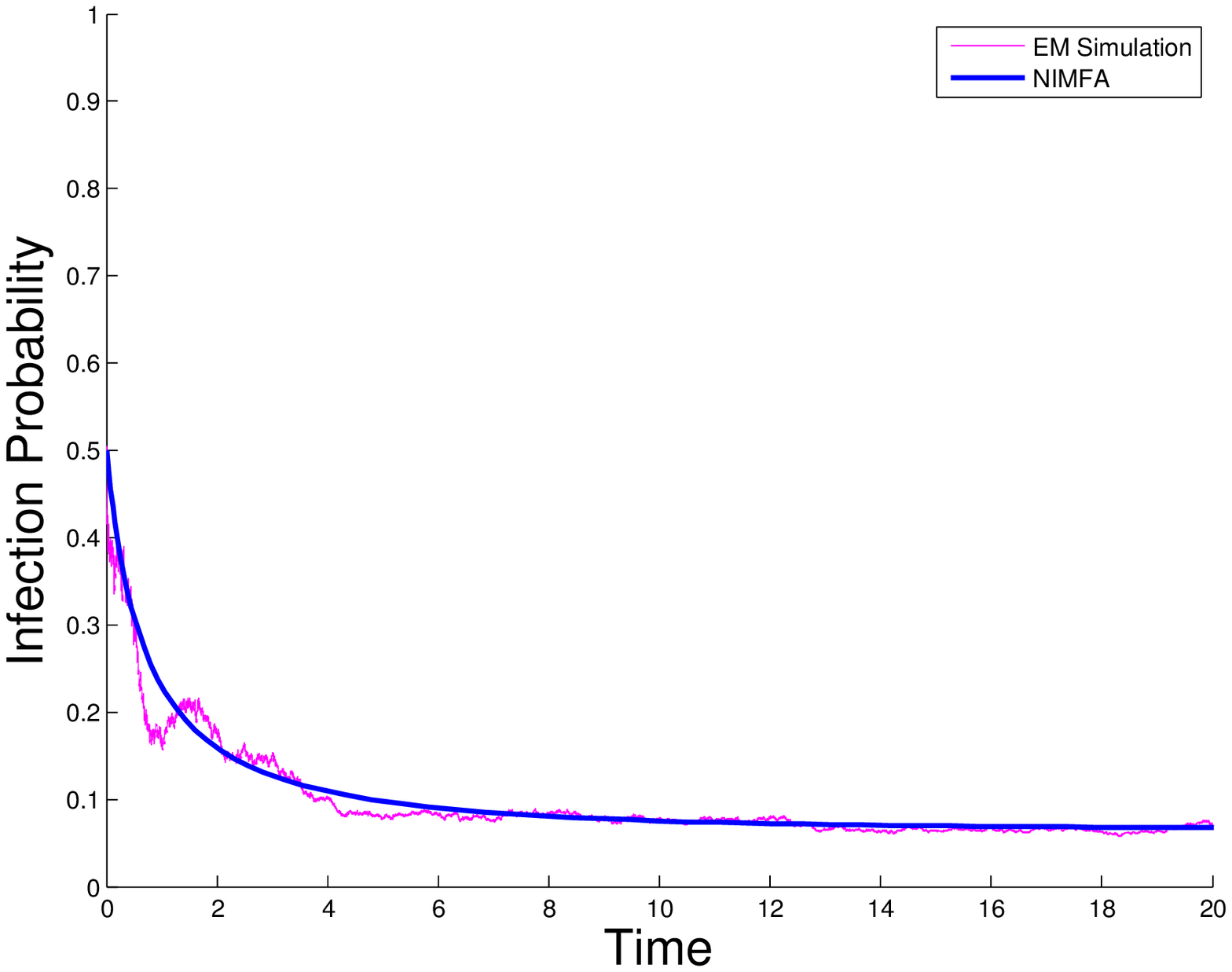}}
      \end{minipage}
	 \hspace{-0.5cm}
	 \begin{minipage}{.3\textwidth}
	 \subfigure[{}]%
	 {\includegraphics[width=\textwidth]{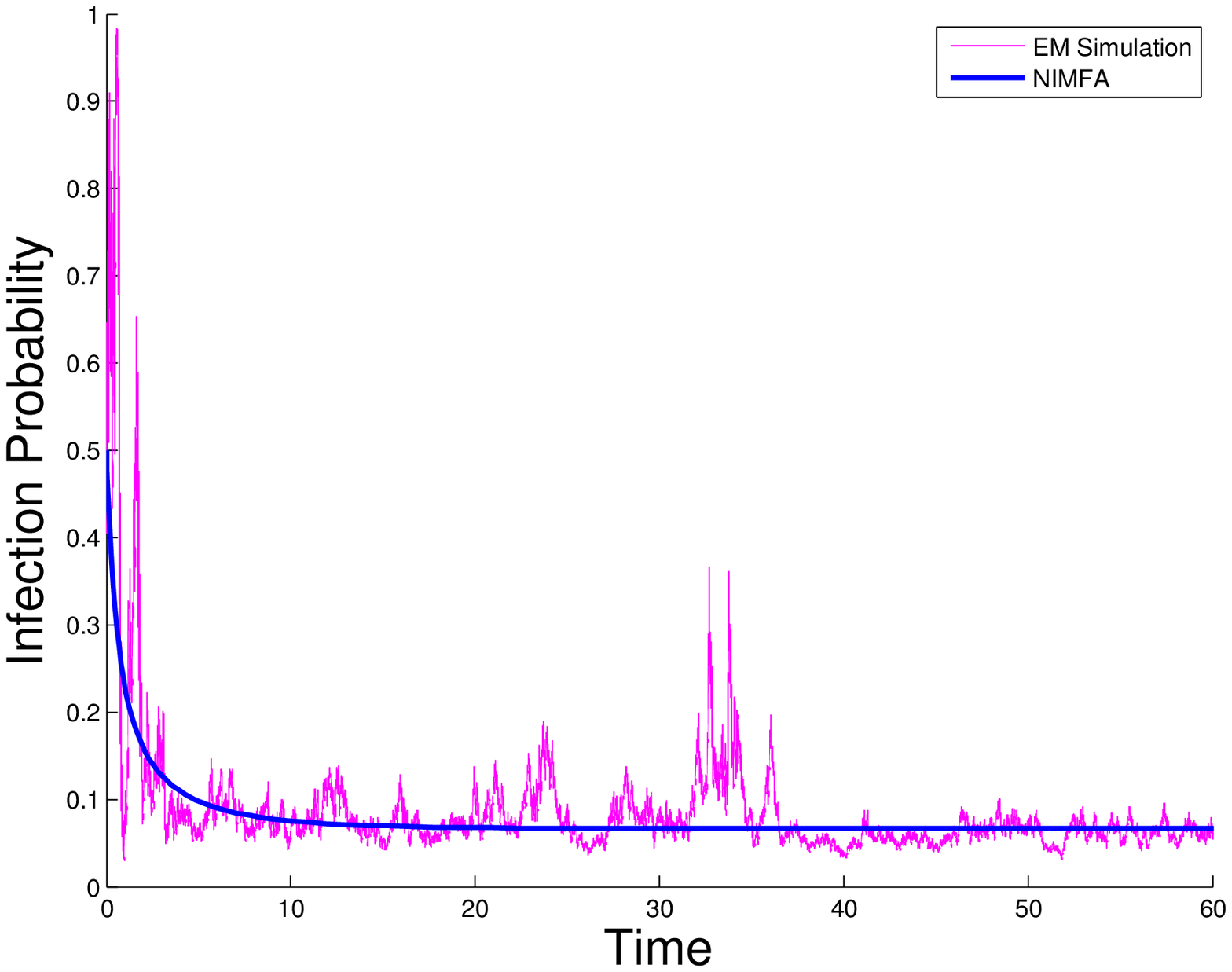}}
\end{minipage}

        \caption{ Dynamics of the infection probability of the node $4$ in a graph
with ring topology, and $N=50$, where $\tau_c^{(1)}=0.5$: EM approximation of the solution of \eqref{sdevect} versus
solution of \eqref{nimfa}. 
a) $\beta=4.1$, $\delta=16.3$, $M=8$, $\beta/\delta < \tau_c^s=0.7454$.  b) $\beta=1.5$, $\delta=2.8$, $\beta/\delta > \tau_p^s=0.5143$, M=0.8. c)$\beta=1.5$, $\delta=2.8$, $\beta/\delta >\tau_p^s=0.8571$, M=4. }\label{fig:2}
      \end{figure}

\begin{figure}[h]
        \centering
        \begin{minipage}[c]{.3\textwidth}
          \subfigure[{}]
         { \includegraphics[width=\textwidth]{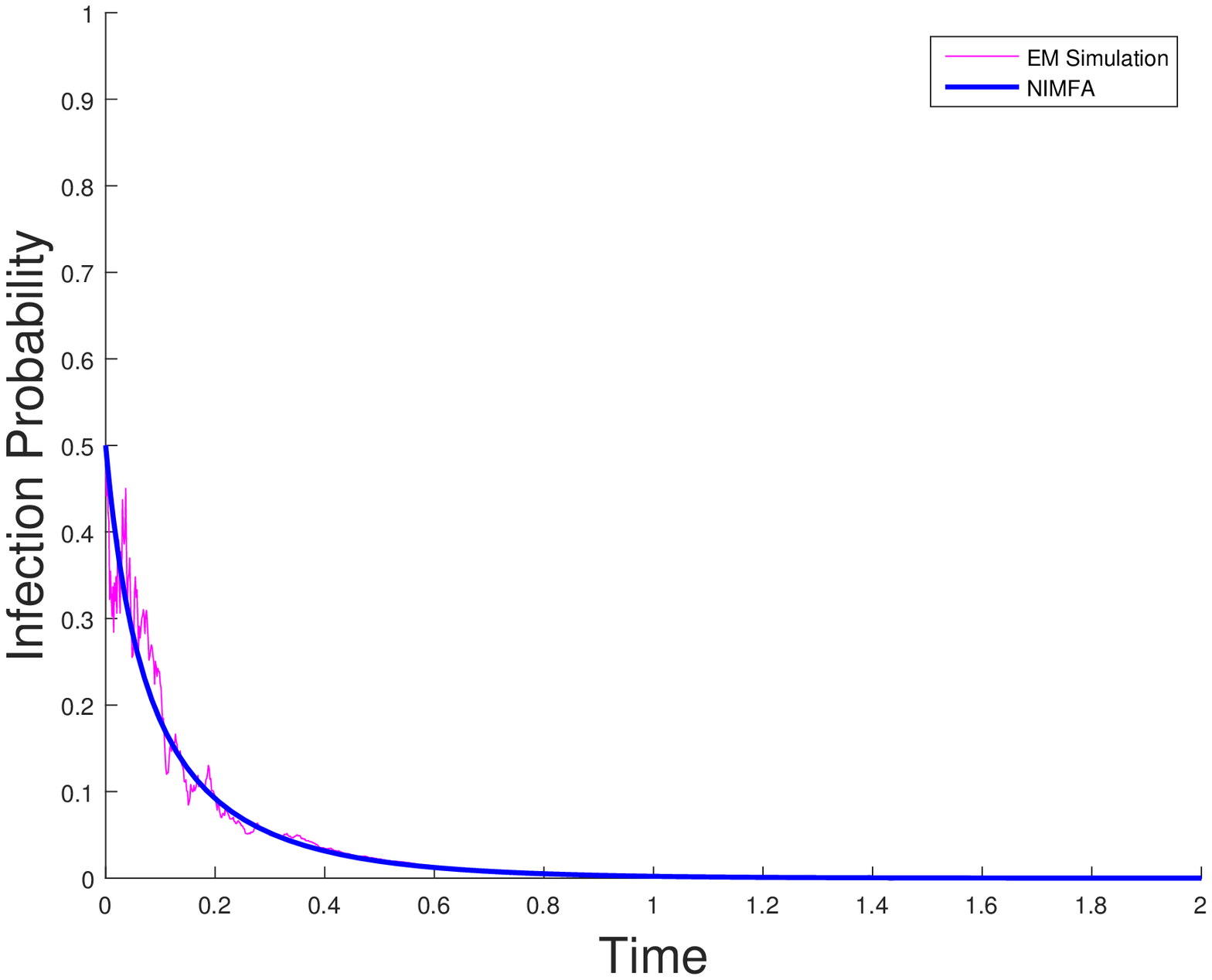}}
        \end{minipage}%
\hspace{-0.5cm}
        \begin{minipage}[c]{.34\textwidth}
	 
         \subfigure[{}]
        {\includegraphics[width=\textwidth]{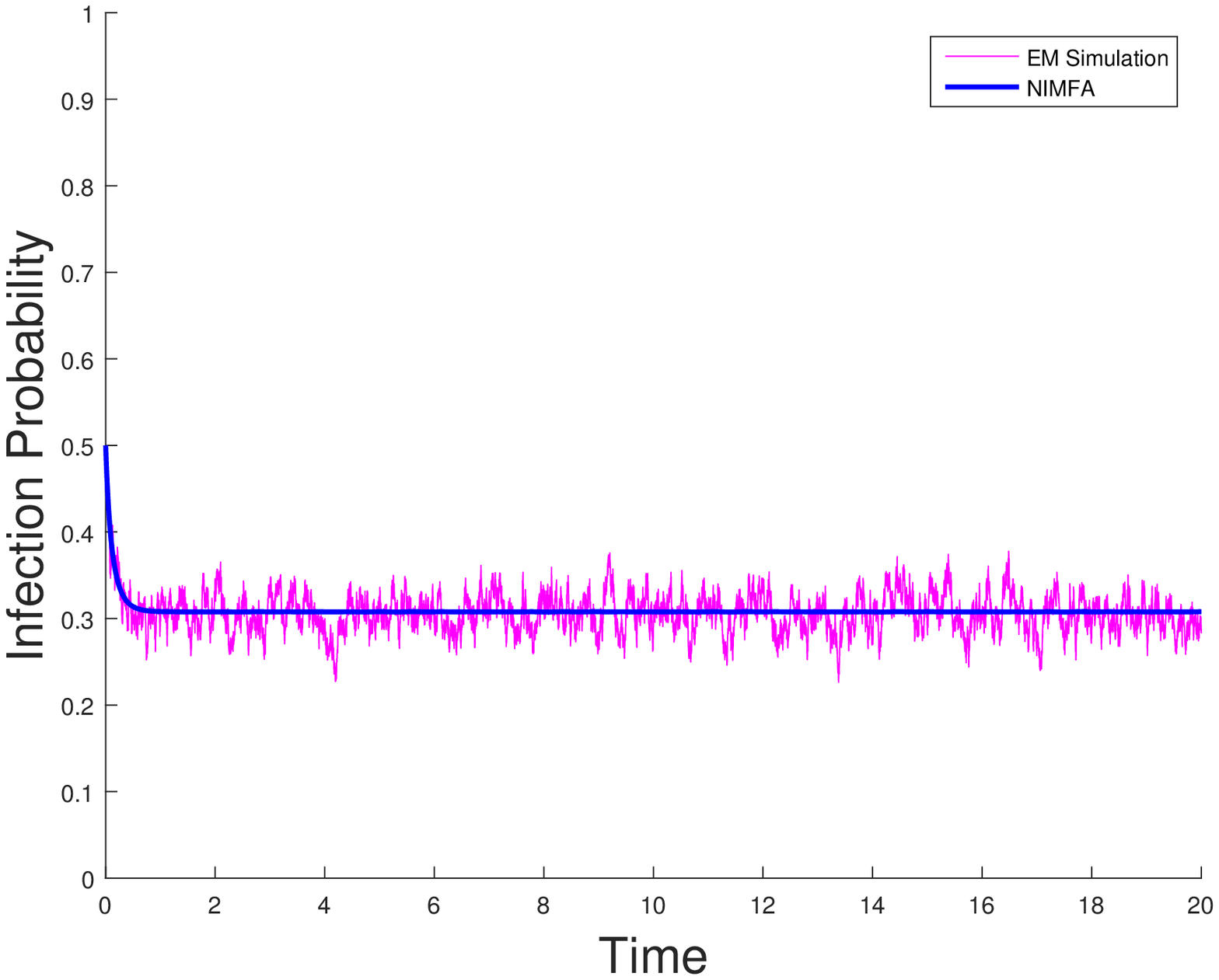}}
        \end{minipage}
	 \hspace{-0.5cm}
	 \begin{minipage}{.34\textwidth}
	 \subfigure[{}]%
	 {\includegraphics[width=\textwidth]{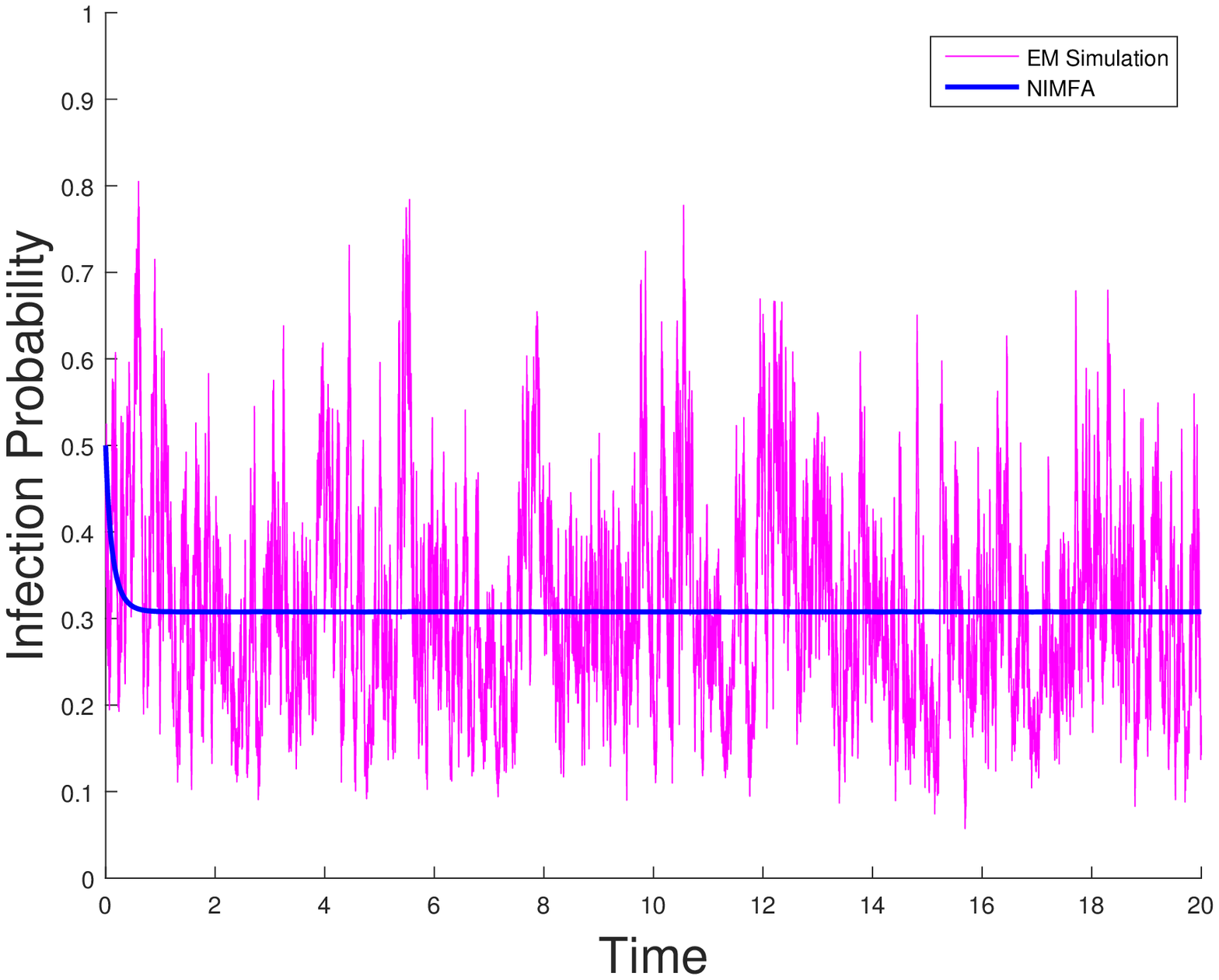}}
\end{minipage}

        \caption{ Dynamics of the infection probability of the node $4$ in a complete graph
with $N=40$, where $\tau_c^{(1)}=0.0256$: EM approximation of the solution of \eqref{sdevect} versus
solution of \eqref{nimfa}. 
 a) $\beta=0.5$, $\delta=23.9$, $\beta/\delta \leq \tau_c^s=0.0210$,  $M=0.3$.  b) $\beta=0.5$, $\delta=13.5$, $\beta/\delta > \tau_p^s=0.0258$, $M=0.04$. c) $\beta=0.5$, $\delta=13.5$, $\beta/\delta > \tau_p^s=0.0338$, M=0.3}\label{fig:3}
      \end{figure}

\begin{figure}[!h]
        \centering
        \begin{minipage}[c]{.3\textwidth}
          \subfigure[{}]
         { \includegraphics[width=\textwidth]{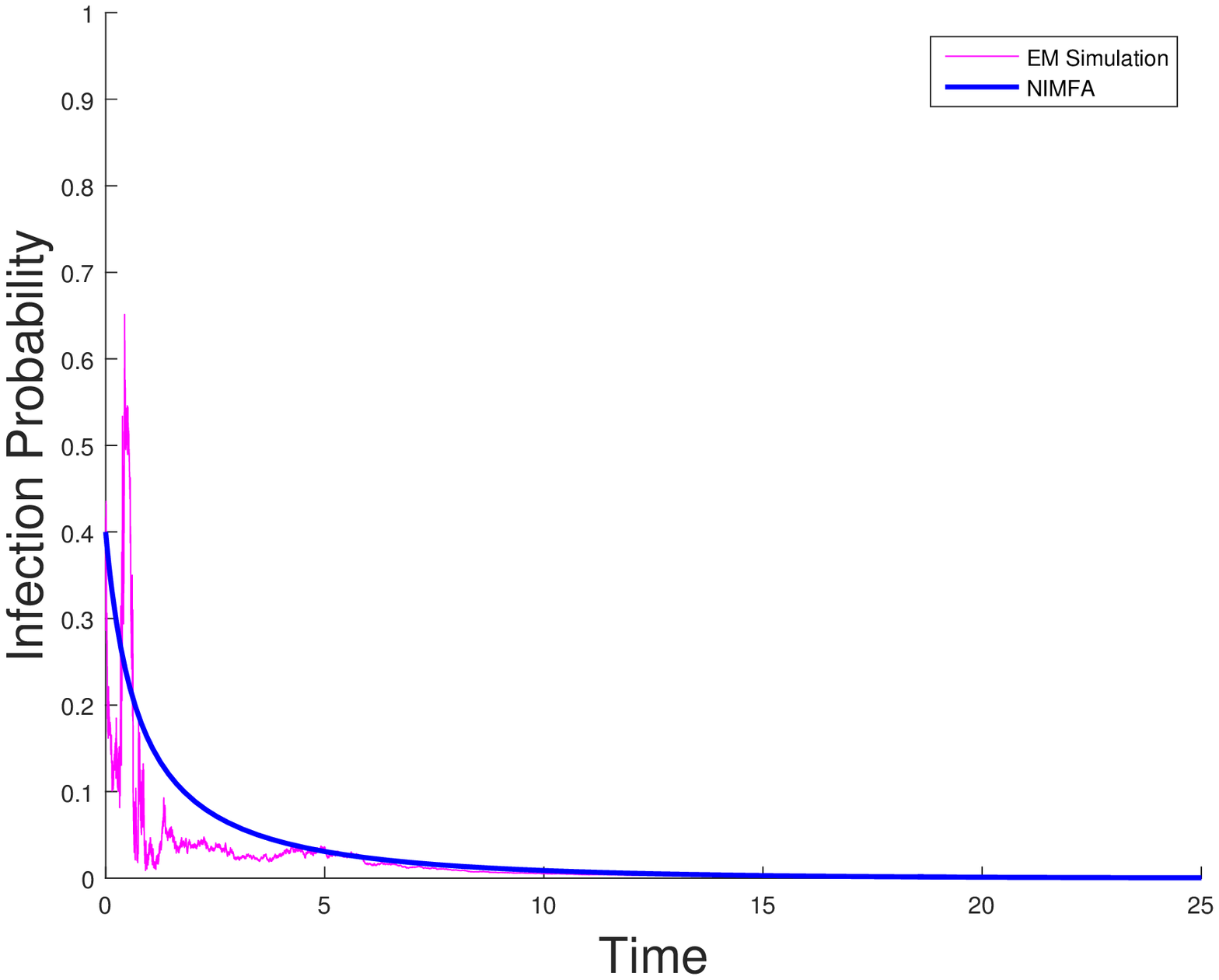}}
        \end{minipage}%
\hspace{-0.5cm}
        \begin{minipage}[c]{.3\textwidth}
	 
         \subfigure[{}]
         {  \includegraphics[width=\textwidth]{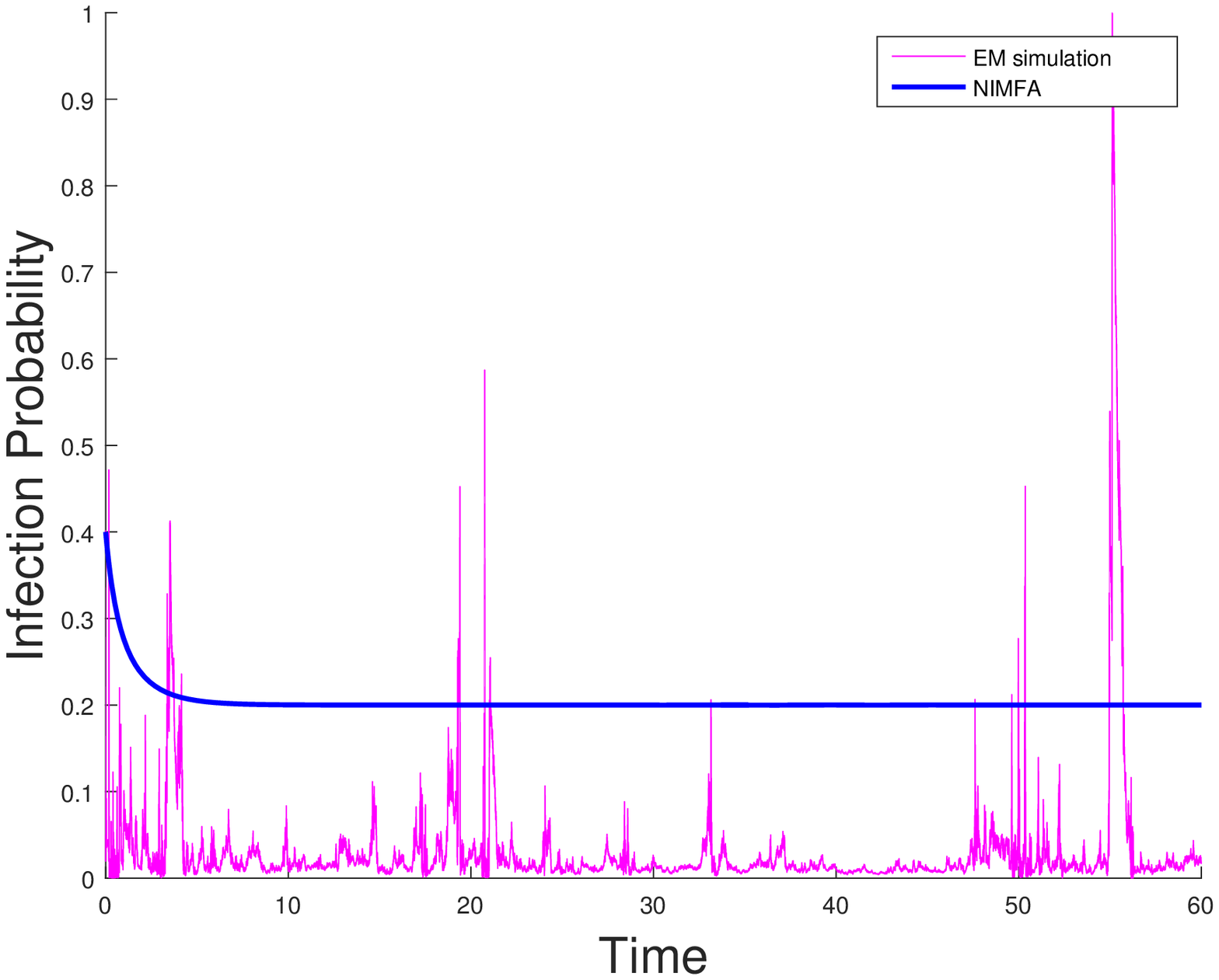}}
        \end{minipage}
	 \hspace{-0.5cm}
	 \begin{minipage}{.3\textwidth}
	 \subfigure[{}]%
	 {\includegraphics[width=\textwidth]{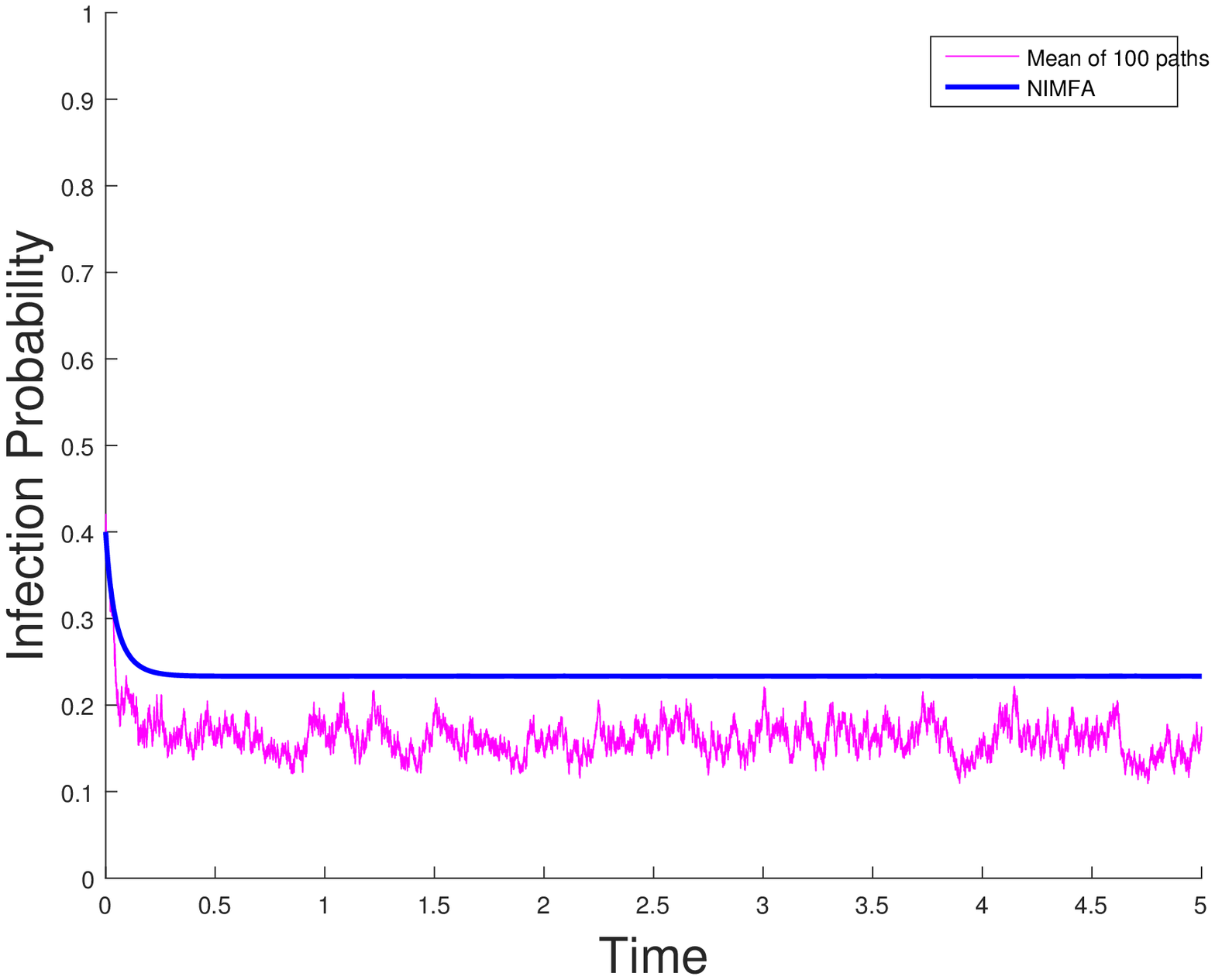}}
\end{minipage}

        \caption{  Dynamics of the infection probability of the node $4$ for a graph with ring topology, 
and $N = 50$, where $\tau_c^{(1)}=0.5$: EM approximation of the solution of \eqref{sdevect} versus
solution of \eqref{nimfa}. 
a) $\beta=1.5$, $\delta=3.2$, $M=10$,  $0<\beta/\delta < \tau_1^{(c)}$. b) $\beta=1.5$, $\delta=2.4$, $M=40$,  $ \tau_1^{(c)} <\beta/\delta < \tau_p^s$. c) EM approximation of the solution of \eqref{sdevect} averaged over 100 sample paths versus. $\beta=30$, $\delta=46$, $M=30$, $\tau_c^{(1)} <\beta/\delta < \tau_p^s$. }\label{fig:4}
      \end{figure}

\section{Numerical experiments}\label{num}

{We numerically simulate the solution of the system \eqref{sdevect} by the Euler-Maruyama (EM) method \cite{higham2001}.
{In Figure \ref{fig:surf} we consider a graph with an arbitrary topology with $N=80$ and average degree equals to $49$. We plot the dynamics of the infection probability of a given node, along one sample path, for different values of $\delta$, and fixed values of $\beta=0.2$ and $M=0.08$. 
Here it is clear how the behavior in time of the solution changes with respect the value of $\delta$, once fixed the mean value of the infection rate and the level of noise.
Indeed, the values  $\delta=10.8$ and $\delta=11.2$ satisfy the condition \eqref{e:221115.2} and, as we expect, the solution approaches the disease-free equilibrium. The values of $\delta=9.2$ and $\delta=9.6$, instead, satisfy the condition \eqref{e:201115.cond} and we can see that the solution appears stochastically permanent. The intermediate values of $\delta=10$ and $\delta=10.4$ do not satisfy any of the two sufficient conditions for extinction and persistence respectively. We see that for $\delta=10$ the solution stays positive, instead for $\delta=10.4$ the solution approaches the zero point.}

{In Figure \ref{fig:2} 
(a), (b) and (c) 
we compare the solution of the system \eqref{sdevect} with the solution of the NIMFA system \eqref{nimfa}.
 We consider a graph with ring topology (each node has two neighbors) and $N=50$.}
In (a) we consider values of $\beta$ and $\delta$ such that $\tau < \tau_c^s$ with $M=8$, 
and we plot the dynamical behavior of one single selected node, by computing the solution of \eqref{sdevect} along one sample path.
The numerical computation confirms the stability result in Theorem \ref{THMzero}. 
In (b) and (c), instead, we consider values of $\beta$ and $\delta$ such that $\tau >\tau_p^s$ for $M=0.8$ and $M=4$ respectively. 
We can recognize the behavior aforesaid in Theorem \ref{THMperm}. 
Moreover we can see that, if the assumption \eqref{e:201115.cond} of Theorem \ref{THMperm} holds, the solution of \eqref{sdevect} fluctuates around the endemic equilibrium of the
system \eqref{nimfa} and, clearly, with the decrease of the intensity of the noise, the fluctuations are smaller.
The same type of numerical experiments have been done in Figure \ref{fig:3} 
(a), (b) and (c), for a complete graph (all nodes are connected among themselves) and $N=50$.

In Figures \ref{fig:4} 
and \ref{fig:5} 
we investigate the behavior of the solution of \eqref{sdevect}, in the case where both
 conditions of stability \eqref{e:221115.2} and permanence \eqref{e:201115.cond} are not satisfied. 
 Precisely, in Figure \ref{fig:4} 
 we consider the graph with ring topology and $N=50$; 
 in particular, in (a) we consider the case where $0<\beta/\delta < \tau_c^{(1)}$
 and we can see that the solution of \eqref{sdevect} tends to zero, as that of the deterministic system \eqref{nimfa}. 
 In Figure \ref{fig:4} 
 (b), instead, we analyze the case $ \tau_c^{(1)} <\beta/\delta < \tau_p^s$, we can observe 
that the solution of \eqref{sdevect} does not fluctuate around the solution of \eqref{nimfa}. Then, in (c) the EM solution is averaged over 100 sample paths always in the case $ \tau_c^{(1)} <\beta/\delta < \tau_p^s$; we can see that, in this case, NIMFA provides an upper bound of our infection dynamics.   
  The same behavior, in the region $ \tau_c^{(1)} <\beta/\delta < \tau_p^s$, of one sample path, and of the averaged solution,  is depicted also by Figure \ref{fig:5} 
  (a) and (b) respectively, where we consider a graph with an arbitrary topology and $N=13$.

%

%

\begin{figure}[!ht]
        \centering
        \begin{minipage}[c]{.40\textwidth}
          \subfigure[{}]
         { \includegraphics[width=\textwidth]{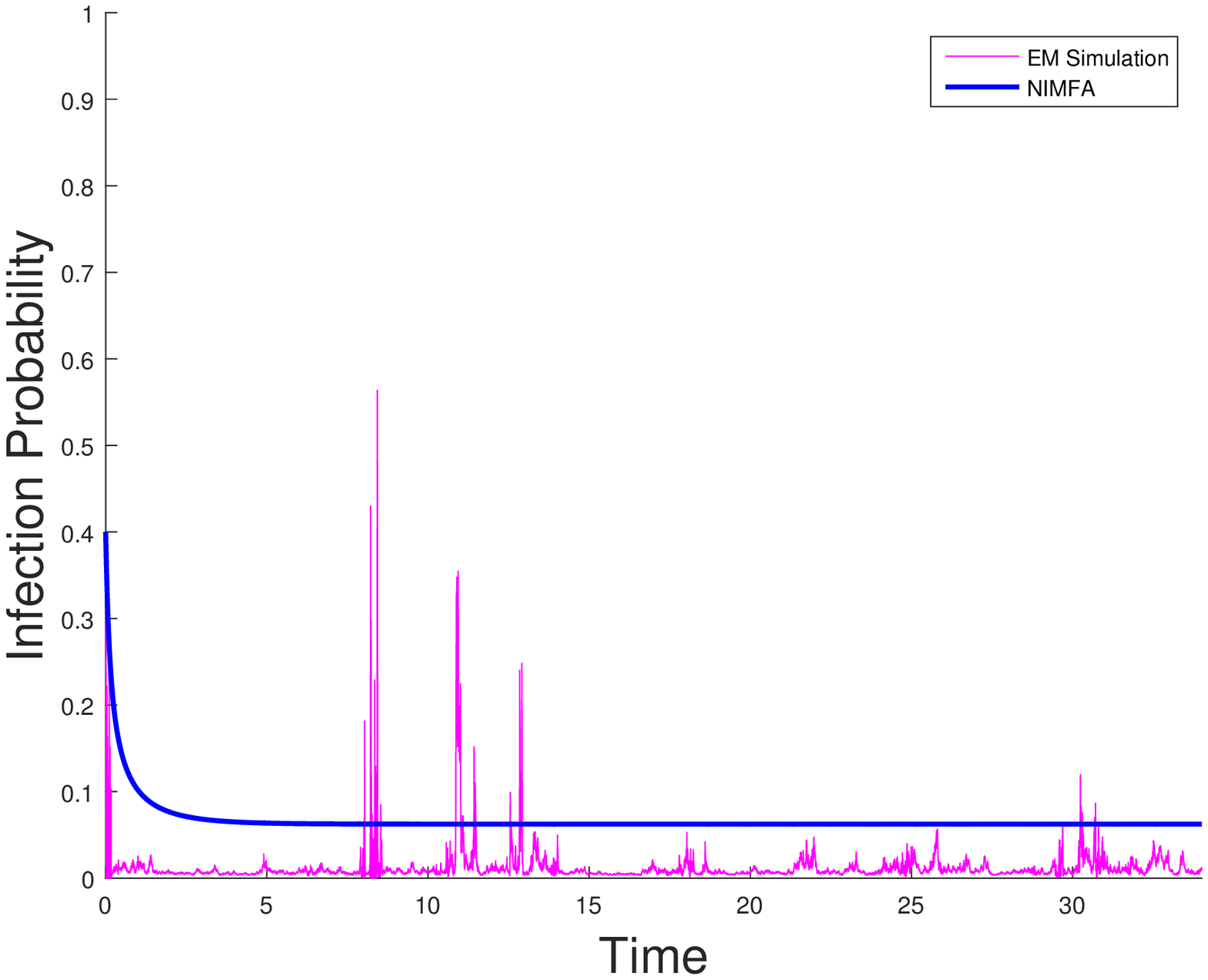}}
        \end{minipage}%
\hspace{.5cm}
        \begin{minipage}[c]{.40\textwidth}
	 
         \subfigure[{}]
         {  \includegraphics[width=\textwidth]{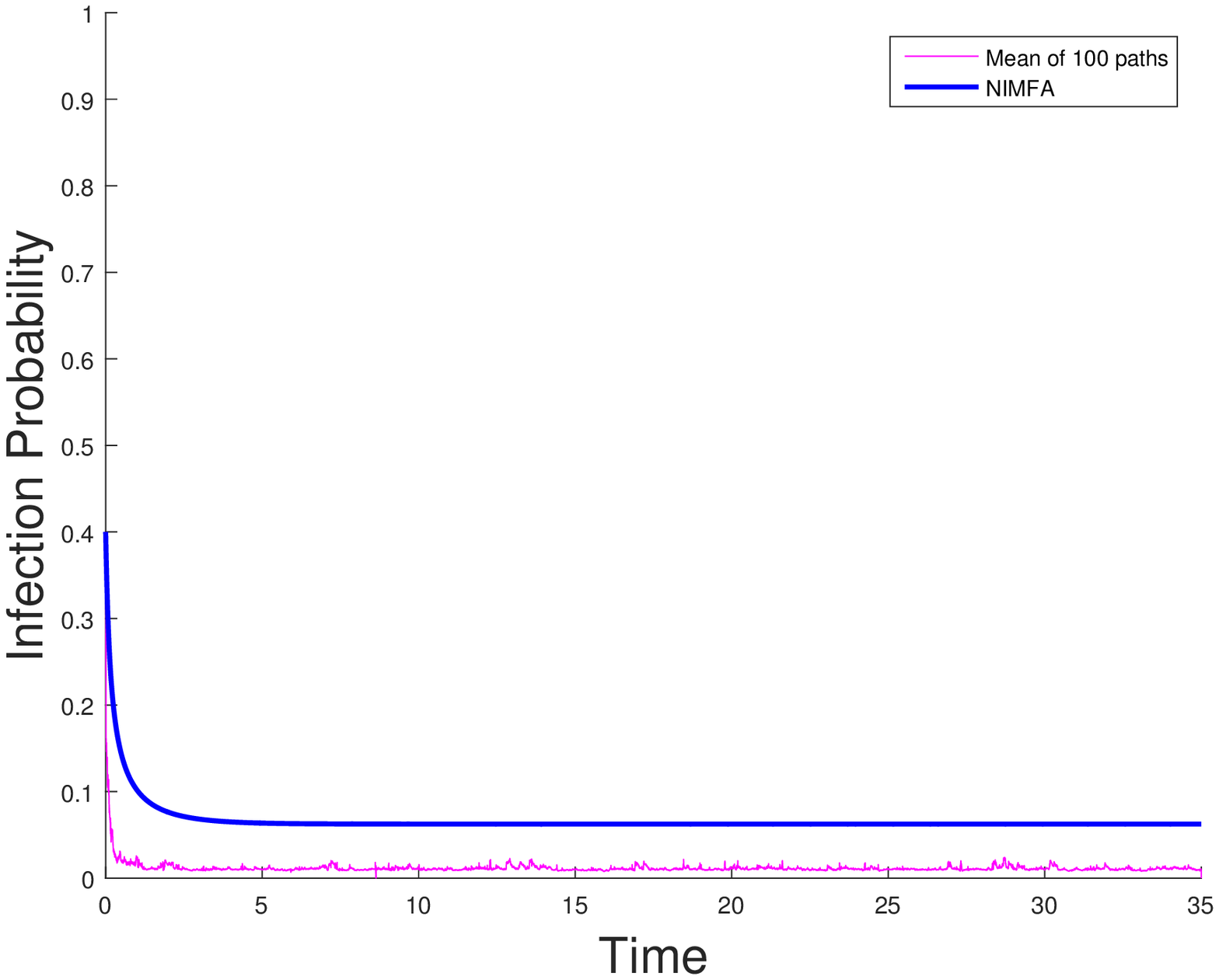}}
        \end{minipage}
	         \caption{   Dynamics of the infection probability of the node 4 in a graph with arbitrary topology and $N=13$, where $\tau_c^{(1)}=0.2045$. 
					$\beta=2.2$, $\delta=10$, $M=40$, $\tau_c^{(1)}<\beta/\delta < \tau_p^s$. 
(a)  EM approximation of \eqref{sdevect} versus
solution of \eqref{nimfa}.  (b)  EM approximation of the solution of \eqref{sdevect} averaged over 100 sample paths versus
solution of \eqref{nimfa}.}
\label{fig:5}
      \end{figure}

\section{Conclusion}

{The aim of the this paper is to investigate the behavior of  epidemics spreading in a population with inhomogeneous contact rates, where the rates at which each individual can be infected from its neighbors are considered as independent stochastic processes.} 

 {Our idea is to start from the deterministic system \eqref{nimfa}, obtained after a mean-field approximation first proposed in \cite{VanMieghem2009}, where the infection rate $\beta$ between each two given individuals is either zero, if they are not in contact, or
 a given constant, if they are connected. 
 However, since epidemic processes are usually affected by random disturbances, we  introduce in this model a stochastic heterogeneity of the population
 by taking into account a variability in time of the parameters. 
 Precisely, with respect to the last point, we 
 assume that the rate of receiving the infection, for each individual, varies around a common average value under the action of a family of independent,
 identically distributed Brownian motions.
This heterogenity may depend, e.g., on the state of the immune system or, in the case of diffusion of opinions, on the characterial propensity to get involved with other people's ideas.}  

{As opposite to our previous models (see \cite{Bonaccorsi2014a, Bonaccorsi2015}) we do not consider intermediate structures, like households, hospitals, cities, airports, etc.,
but we hope to return to this problem in a subsequent paper. 
Another improvement of the work would be to consider a model that allows for dynamic communities demographics, to take different average values of the infection rate for each node, 
 and to consider other types of environmental noise, like the telegraph noise.}

{With respect to the stochastic system \eqref{e:090615-1}, we  prove that it possesses a unique global
solution that remains within $(0,1)^N$ whenever it starts from this region.
Then we concentrate on the asymptotic behavior of the solution.
We may show that, if we are
in a certain region parameters,
 the solution tends to extinction almost surely. 
On the opposite,  we have discussed on stochastic permanence of the solution, finding a condition under which the epidemic process is stochastically permanent.}
{The two regions of extinction and permanence are, unfortunately, not adjacent, as there is a gap between them, whose extension
depends on the specific level of noise. 
In this intermediate region, we have performed numerical simulations to test the long time behavior of the system.}\\
{The numerical experiments confirms the analytical results in both regions of permanence and extinction. In particular, in the region of permanence, it has been seen that the solution of the stochastic system fluctuates around the positive equilibrium point of the deterministic system \eqref{nimfa}, clearly the fluctuations increases with the value of the noise level.}\\
{In the intermediate region, we have seen that, our system tends to have the same asymptotic behavior of the
deterministic model \eqref{nimfa}, however for possible large deviations from the mean infection rate, above the NIMFA threshold, the solution of our stochastic system does not seem to fluctuate around the positive equilibrium point of \eqref{nimfa}, and we can observe some differences in the average level of infection.}

 {To the best of our knowledge this is one of the first attempts to consider the parameters of the epidemic model as stochastic processes, in the case of heterogeneous networked population, thus it is clear that many improvements of the model could be possible.  Since the topic is of high practical relevance, clearly it is fundamental to conduct further investigations on the
interplay between parameter drift, noise, population contact network and heterogeneity for
 epidemic models 
\cite{widder2014heterogeneous}}.
%

\section*{Bibliography}

\def\cprime{$'$}

\end{document}